\newtheorem{theorem}{Theorem}[section]
\newtheorem{lemma}[theorem]{Lemma}
\newtheorem{proposition}[theorem]{Proposition}
\theoremstyle{definition}
\newtheorem{definition}[theorem]{Definition}
\newtheorem{ipotesi}[theorem]{Assumption}
\numberwithin{equation}{section}
\numberwithin{subsection}{section}
\newcommand\supp{{\rm spt}}
\newcommand{\sing}{{\rm Sing}}
\newcommand{\reg}{{\rm Reg}}
\newcommand\res{\mathop{\hbox{\vrule height 7pt width .3pt depth 0pt
\vrule height .3pt width 5pt depth 0pt}}\nolimits}
\newcommand{\gr}{{\rm Gr}}
\newcommand{\bT}{\mathbf{T}}
\newcommand{\bG}{\mathbf{G}}
\newcommand{\cH}{{\mathcal{H}}}
\newcommand{\bJ}{{\mathbf{J}}}
\newcommand{\p}{{\mathbf{p}}}
\newcommand{\bI}{{\mathbf{I}}}
\newcommand{\bE}{{\mathbf{E}}}
\newcommand{\bOmega}{{\mathbf{\Omega}}}
\newcommand{\bB}{{\mathbf{B}}}
\newcommand{\bC}{{\mathbf{C}}}
\newcommand{\be}{{\mathbf{e}}}
\newcommand{\bd}{{\mathbf{d}}}
\newcommand\N{{\mathbb N}}
\newcommand\R{{\mathbb R}}
\newcommand{\eps}{{\varepsilon}}
\newcommand{\bA}{\mathbf{A}}
\newcommand{\bmax}{\mathbf{m}}
\def\XXint#1#2#3{{\setbox0=\hbox{$#1{#2#3}{\int}$ }
\vcenter{\hbox{$#2#3$ }}\kern-.6\wd0}}
\newcommand{\cone}{{\times\hspace{-0.6em}\times\,}}
\newcommand{\Lip}{{\rm {Lip}}}
\newcommand{\cG}{{\mathcal{G}}}
\newcommand{\cN}{{\mathcal{N}}}
\newcommand{\cT}{{\mathcal{T}}}
\newcommand{\cQ}{{\mathcal{Q}}}
\newcommand{\mass}{{\mathbf{M}}}
\renewcommand\d{\mathbf{d}}
\newcommand\e{\mathbf{e}}
\def\I#1{{\mathcal{A}}_{#1}}
\newcommand{\Iqs}{{\mathcal{A}}_Q(\R^{n})}
\newcommand{\Iq}{{\mathcal{A}}_Q}
\def\a#1{\left\llbracket{#1}\right\rrbracket}
\newcommand{\norm}[2]{\left\|#1\right\|_{#2}}
\newcommand{\D}{\textup{Dir}}
\newcommand{\de}{\partial}
\newcommand{\xii}{{\bm{\xi}}}
\newcommand{\ro}{{\bm{\rho}}}
\newcommand{\g}{{g'}}
\newcommand{\etaa}{{\bm{\eta}}}
\newcommand{\ph}{\varphi}
\newcommand{\lin}{{\text{lin}}}
\newcommand{\bh}{\mathbf{h}}
\newcommand{\Ema}{\color{black}}
\title[Lipschitz approximation for almost minimal current]
{Regularity theory for $2$-dimensional almost minimal currents I: Lipschitz approximation}
\author{Camillo De Lellis, Emanuele Spadaro and Luca Spolaor}
\begin{document}

\begin{abstract}
We construct Lipschitz $Q$-valued functions which approximate carefully integral currents when their cylindrical 
excess is small and they are almost minimizing in a suitable sense. This result is used in two subsequent works to prove the
discreteness of the singular set for the following three classes of $2$-dimensional integral currents: 
area minimizing in Riemannian manifolds, semicalibrated and spherical cross sections of $3$-dimensional
area minimizing cones.
\end{abstract}

\maketitle

This paper is the second in a series of works aimed at establishing an optimal regularity theory
for $2$-dimensional integral currents which are almost minimizing in a suitable sense. Building upon the monumental
work of Almgren \cite{Alm}, Chang in \cite{Chang} established that $2$-dimensional area-minimizing currents in
Riemannian manifolds are classical minimal surfaces, namely they are regular (in the interior) except for a discrete
set of branching singularities. The argument of Chang is however not entirely complete since a key starting point 
of his analysis, the existence of the so-called ``branched center manifold'', is only sketched in the appendix of
\cite{Chang} and requires the understanding (and a suitable modification) of the most involved portion of the monograph \cite{Alm}. 

An alternative proof of Chang's theorem has been found by Rivi\`ere and Tian in \cite{RT1} for the special case
of $J$-holomorphic curves. Later on the approach of Rivi\`ere and Tian has been generalized by Bellettini and Rivi\`ere
in \cite{BeRi} to handle a case which is not covered by \cite{Chang}, namely that of special Legendrian cycles in
$\mathbb S^5$ (see also \cite{Be3} for a further generalization).

Meanwhile the first and second author revisited Almgren's theory giving a much shorter version of his program
for proving that area-minimizing currents are regular up to a set of Hausdorff codimension $2$, cf. \cite{DS1,DS2,DS3,DS4,DS5}.
In this note and its companion papers \cite{DSS3,DSS4} we build upon the latter works in order to give a
complete regularity theory which includes both the theorems of Chang and Bellettini-Rivi\`ere as special cases{\Ema , in particular recovering the fine description of the structure of singular points proven by Chang and extending this picture to the cases of semicalibrated currents and spherical cross-sections (we refer to \cite{DSS3, DSS4} for more precise statements)}. 

We start by introducing the following terminology (cf. \cite[Definition 0.3]{DSS1}).

\begin{definition}\label{d:semicalibrated}
Let $\Sigma \subset R^{m+n}$ be a $C^2$ submanifold and $U\subset \R^{m+n}$ an open set.
\begin{itemize}
  \item[(a)] An $m$-dimensional integral current $T$ with finite mass and $\supp (T)\subset \Sigma\cap U$ is area-minimizing in $\Sigma\cap U$
if $\mass(T + \partial S)\geq \mass(T)$ for any $(m+1)$-dimensional integral current $S$ with $\supp (S) \subset \subset \Sigma\cap U$.
 \item[(b)] A semicalibration (in $\Sigma$) is a $C^1$ $m$-form $\omega$ on $\Sigma$ such that 
  $\|\omega_x\|_c \leq 1$ at every $x\in \Sigma$, where $\|\cdot \|_c$ denotes the comass norm on $\Lambda^m T_x \Sigma$. 
  An $m$-dimensional integral current $T$ with $\supp (T)\subset \Sigma$ is {\em semicalibrated} by $\omega$ if $\omega_x (\vec{T}) = 1$ for $\|T\|$-a.e. $x$.
  \item[(c)]  An $m$-dimensional integral current $T$ supported in $\partial \bB_R (p) \subset \R^{m+n}$ is a {\em spherical cross-section of an area-minimizing cone} if ${p\cone T}$ is area-minimizing. 
\end{itemize}
\end{definition}

Calibrated submanifolds, namely currents $T$ as in (b) where the calibrated form is closed, have been central objects of study in 
several areas of differential geometry and mathematical physics since the seminal work of Harvey and Lawson, cf. \cite{HL}. Two primary examples are holomorphic subvarieties and special Lagrangians in Calabi-Yau manifolds, which play an important role in string theory (especially regarding mirror symmetry, cf. \cite{Joyce, SYZ}) but also emerge naturally in gauge theory (see \cite{Tian}). Semicalibrations are a natural generalization of calibrations: since the condition $d\omega =0$ on the calibrating form is rather rigid and in particular very unstable under deformations. In fact semicalibrations were considered already in \cite{Tian} (cf. Section 6 therein) and around the same time they became rather popular in string theory, when several authors directed their attention to non-Calabi-Yau manifolds (the subject is nowadays known as ``flux compactification'', cf. \cite{Grana}): in that context the natural notion to consider is indeed a special class of semicalibrating forms (see for
  instance the works \cite{GPT,Gutowski}, where these are called quasi calibrations).

\medskip

In what follows, given an integer rectifiable current $T$, we denote by $\reg (T)$ the subset of $\supp (T)\setminus \supp (\partial T)$ consisting of those points $x$ for which there is a neighborhood $U$ such that $T\res U$ is a (constant multiple of) a regular oriented submanifold. Correspondingly, $\sing (T)$ is
the set $\supp (T)\setminus (\supp (\partial T)\cup \reg (T))$. Observe that $\reg (T)$ is relatively open in $\supp (T) \setminus \supp (\partial T)$
and thus $\sing (T)$ is relatively closed.
The main result of this and the works \cite{DSS3,DSS4} is then the following

\begin{theorem}\label{t:finale}
Let $\Sigma$ and $\omega$ be as in Definition \ref{d:semicalibrated}, let $T$ be as in (a), (b) or (c) and assume in addition
that $m=2$, that $\Sigma$ is of class $C^{3,\alpha}$ and $\omega$ of class $C^{2,\alpha}$ for some positive $\alpha$. Then $\sing (T)$
is discrete.
\end{theorem}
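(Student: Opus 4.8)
The plan is to follow the blueprint of Almgren's program as streamlined in \cite{DS1,DS2,DS3,DS4,DS5} and Chang's adaptation \cite{Chang}, extending it uniformly to the three settings of Definition \ref{d:semicalibrated}. The starting point, achieved in this paper, is a Lipschitz $Q$-valued approximation of $T$ with quantitative estimates under the smallness of the cylindrical excess together with the almost-minimality; the key novelty compared to \cite{DS2} is that the approximation must be sharp enough to detect branching, i.e.\ the errors must be superlinear in the excess so that, after harmonic blow-up, one obtains a genuine $Q$-valued Dir-minimizer (in the semicalibrated and spherical cases the almost-minimality produces only a lower-order perturbation of the Dirichlet energy, which one controls via the estimates established here). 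Granting this, the scheme proceeds as follows. First, one classifies the possible tangent cones at an arbitrary point $p\in\sing(T)$: by monotonicity (suitably corrected for the semicalibrated and spherical settings) the density $\Theta(T,p)$ is a well-defined integer $Q\ge 1$, and blow-ups converge to area-minimizing cones of multiplicity $Q$; in dimension $m=2$ these are necessarily planes with multiplicity, so $p$ is a "flat" singular point and the excess decays at least at a slow rate.

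Second, one constructs the \emph{branched center manifold}: a $C^{3,\kappa}$ (branched) $2$-dimensional submanifold $\cM$ passing through $p$, together with a $Q$-valued normal approximation $N$ on $\cM$, which captures the "average" of the sheets of $T$ to high order and on which $T$ is well-approximated. This is the technical heart of \cite{DSS3} and rests on the Lipschitz approximation proven here, iterated on a Whitney-type decomposition adapted to the (possibly branched) conformal structure coming from Chang's observation that in the two-dimensional case the relevant harmonic blow-ups have isolated branch points whose local model is $z\mapsto z^{k/Q}$. Third, one runs the \emph{frequency function argument} of Almgren on the normal approximation $N$ on $\cM$: its monotonicity (again up to errors controlled by the center-manifold estimates) forces either that $N\equiv 0$ near $p$ — in which case $T$ coincides with (a branched sheet of) $\cM$ and $p$ is not singular, a contradiction — or that $N$ has a nontrivial homogeneous blow-up, which is a $\mathrm{Dir}$-minimizing $Q$-valued function of the local conformal variable. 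The structure of such blow-ups (they are, in the branched coordinate, genuinely $Q$ distinct holomorphic-type sheets that separate) then yields a \emph{decay of frequency} and hence a unique tangent cone and a rate of convergence to it.

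Fourth, and finally, one converts uniqueness-of-tangent-cone-with-rate into discreteness: around each $p\in\sing(T)$ there is a punctured neighborhood in which $T$ is, in suitable branched coordinates centered at $p$, a small perturbation of $Q\,\a{z^{k/Q}}$ with $k$ coprime to $Q$ (or of a flat multiplicity-$Q$ plane with $Q$ honest sheets, in which case one reduces $Q$ and iterates); such a model has no singular point other than the center, and the perturbation is too small to create new ones. Thus $\sing(T)$ has no accumulation points, which since $\sing(T)$ is relatively closed in the (locally compact) set $\supp(T)\setminus\supp(\partial T)$ gives discreteness. The main obstacle, in my estimation, is the construction of the branched center manifold in \cite{DSS3}: unlike in the codimension-one or the higher-dimensional multiplicity-one cases, the Whitney decomposition must be performed with respect to a conformal parametrization whose branching locus is \emph{a priori unknown} and must be pinned down simultaneously with $\cM$ itself, and all the a posteriori estimates — most crucially the superlinear ones on the normal approximation — have to survive the degeneration of the metric near the branch point. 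Making the Lipschitz approximation of the present paper quantitatively strong enough (in particular with the correct superlinear exponents and with constants uniform across the area-minimizing, semicalibrated and spherical cross-section cases) is precisely what renders that construction feasible, which is why this paper is logically prior to the rest.
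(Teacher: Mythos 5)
Your outline coincides with the program the paper itself declares for Theorem \ref{t:finale}: uniqueness of tangent cones from \cite{DSS1}, the Lipschitz $Q$-valued approximation established in the present paper (which is the only ingredient actually proved here --- the theorem itself is deferred to the companion works), the branched center manifold of \cite{DSS3}, and the frequency-function and final blow-up argument of \cite{DSS4}; so, at the level of detail available, your roadmap is the same as the authors' and is a sketch rather than a proof, exactly as in the paper. The one substantive imprecision is your claim that tangent cones at a singular point are ``necessarily planes with multiplicity'': for $2$-dimensional (almost) minimizing currents they are in general sums of \emph{several distinct} planes with multiplicities, and the reduction to the flat, single-plane case (by induction on the density $Q$, since at a crossing of distinct planes the current locally splits into pieces of lower multiplicity) is itself a step of the argument rather than an automatic consequence of $m=2$.
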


Clearly Chang's result is covered by case (a). As already pointed out, the proof of Theorem~\ref{t:finale} gives in fact more information, namely an accurate description of the behavior of $T$ around any singular point. This is the exact analog of the singularity description provided by Chang \cite{Chang} for the area minimizing case, whose validity is therefore extended to both cases (b) and (c) of Definition \ref{d:semicalibrated}.
The results of Theorem~\ref{t:finale} are
optimal, because of the well-known examples of area minimizing currents
induced by singular complex curves. 
Note, however, that there are many singular semicalibrated
currents which are not calibrated, and
we give an example in the appendix.

The program of extending the Almgren-Chang regularity theory to general semicalibrated currents was started by Rivi\`ere and Tian in \cite{RT2} and indeed their alternative proof of Chang's theorem was meant as a first step towards case (b) of Theorem \ref{t:finale} (cf. \cite[page 743]{RT2}). The first notable contribution which goes beyond the Almgren-Chang result is due to Pumberger and Rivi\`ere in \cite{PuRi}, and important groundbreaking results were then achieved by Bellettini and Rivi\`ere in \cite{BeRi} and by Bellettini in \cite{Be3}. In particular
\cite{BeRi} proved the theorem above for {\Ema Legendrian} cycles in ${\mathbb S}^5$, which form a special subclass of both (b) and (c). The result and the methods were then extended in \cite{Be3} to
a class of $2$-dimensional semicalibrated currents in 5-dimensional manifolds which, roughly speaking, are based on Legendrian cycles as local models. 
In this and the notes \cite{DSS3,DSS4} we give a complete answer in the general $2$-dimensional case. In higher dimensions Almgren's famous bound on the Hausdorff dimension of the singular set has been extended to semicalibrated currents by the third author in \cite{Luca}.

\medskip

Following the Almgren-Chang program, Theorem \ref{t:finale} will be established through a suitable ``blow-up argument'' which requires four essential tools:
\begin{itemize}
\item[(i)] The uniqueness of tangent cones for $T$. This result is a, by now classical, theorem of White for area-minimizing $2$-dimensional currents in the Euclidean
space, cf. \cite{Wh}. Chang extended it to case (a) in the appendix of \cite{Chang}, whereas Pumberger and Rivi\`ere covered case (b) in \cite{PuRi}. 
A general derivation of these results for a wide class of almost minimizers has been given in \cite{DSS1}: the theorems in there
cover, in particular, all the cases of Definition \ref{d:semicalibrated}.
\item[(ii)] The theory of multiple-valued functions, pioneered by Almgren in \cite{Alm}, for which we
will use the results and terminology of the papers \cite{DS1,DS2}.
\item[(iii)] A suitable approximation procedure for integer rectifiable
currents with graphs of multiple valued functions. The one needed in case (a) is already contained in \cite{DS3}, but the latter reference
does not cover the cases (b) and (c): the purpose of this note is to extend the theorems in
\cite{DS3} to these cases.
\item[(iv)] The so-called ``center manifold'': this will be constructed in \cite{DSS3}, whereas the final argument for
Theorem \ref{t:finale} will then be given in \cite{DSS4}.
\end{itemize}

In fact this note does more than just providing (iii) for the cases of (b) and (c), because we give an approximation theorem for almost minimal currents in any dimension $m$, see Definition~\ref{d:Omega-minimal} for the precise condition. 
Indeed, relaxing the minimizing condition in the regularity theory is a central theme in geometric measure theory: from the one hand it could be the first step towards the analysis of different elliptic functionals, from the other hand it has many applications in a variety of problems in which the minimizing condition must be weakened (the examples are numerous: we just cite the fundamental work of Almgren on elliptic variational problems with constraints \cite{Alm-memoirs} and the celebrated paper of Schoen and Simon on stable hypersurfaces, \cite{SS}). However, there are very few results in this direction in higher codimension: compared to the codimension one case the task is much harder, since several delicate arguments of the Almgren-Chang depend sensibly upon the minimizing assumption. This note gives a first contribution by establishing a strong approximation theorem under a very natural condition, see Definition \ref{d:Omega-minimal} for the precise formulation.

\subsection{Acknowledgments}  The research of Camillo De Lellis and Luca Spolaor has been supported by the ERC grant RAM (Regularity for Area Minimizing currents), ERC 306247.

\section{Notation and statement of the main theorem}

{\Ema We introduce the notion of almost minimizers that we are going to use in the paper.

\begin{definition}[$\bOmega$-minimality]\label{d:Omega-minimal}
Let $\bOmega$ be a positive constant.
An integer rectifiable $m$-dimensional current with compact support in $\R^{m+n}$ is called {\em $\bOmega$-minimal} if
\begin{equation}\label{e:Omega}
\mass (T) \leq \mass (T + \partial S) + \bOmega\, \mass (S)\qquad \forall S\in {\bf I}_{m+1} (\R^{m+n})\quad \mbox{with compact support.}
\end{equation}
\end{definition}

In order to state the main result, we need to introduce some notation.}
With $\bB_r (p)$ and $B_r (x)$ we denote, respectively, the open ball with radius $r$ and center $p$ in $\mathbb R^{m+n}$ and the open ball with radius $r$ and center $x$ in $\mathbb R^m$.  $\bC_r (x)$ will always denote the cylinder $B_r (x)\times \mathbb R^n$ and the point $x$ will be omitted when it is the origin. In fact, by a slight abuse of notation, we will often treat the center $x$ as a point in $\mathbb R^{m+n}$, avoiding the correct, but more cumbersome, $(x,0)$.
$e_i$ will denote the unit vectors in the standard basis,
$\pi_0$ the (oriented) plane $\R^m\times \{0\}$ and $\vec \pi_0$
the $m$-vector $e_1\wedge \ldots \wedge e_m$ orienting it.
We denote by $\p$ and $\p^\perp$ the
orthogonal projections onto, respectively, $\pi_0$ and its orthogonal complement $\pi_0^\perp$. In some cases we need orthogonal projections onto other planes $\pi$
and their orthogonal complements $\pi^\perp$,
for which we use the notation $\p_\pi$ and $\p^\perp_\pi$. 
For what concerns integral currents we use the definitions and the notation of \cite{Sim}.
We isolate the main assumption of our approximation theorem in the following

\begin{ipotesi}\label{ipotesi_base}
For some open cylinder $\bC_{4r} (x)$  (with $r\leq 1$)
and some positive integer $Q$,
\begin{equation}\label{e:(H)}
\p_\sharp T = Q\a{B_{4r} (x)}\quad\mbox{and}\quad
\de T \res \bC_{4r} (x) =0\, .
\end{equation}
\end{ipotesi}

{\Ema The following is the notion of {\em excess}, which represents the main regularity parameter for integral currents.}

\begin{definition}[Excess]\label{d:excess}
For a current $T$ as in Assumption \ref{ipotesi_base} we define the \textit{cylindrical excess} $\bE(T,\bC_{r} (x))$,
the \textit{excess measure} $\e_T$ and its {\em density} $\bd_T$:
\begin{gather*}
\bE(T,\bC_r (x)):= \frac{\|T\| (\bC_r (x))}{\omega_m r^m} - Q ,\\
\e_T (A) := \|T\| (A\times\R^{n})  - Q\,|A| \qquad \text{for every Borel }\;A\subset B_r (x),\\
\bd_T(y) := \limsup_{s\to 0} \frac{\e_T (B_s (y))}{\omega_m\,s^m}= \limsup_{s\to 0} \bE (T,\bC_s (y)),
\end{gather*}
where $\omega_m$ is the measure of the $m$-dimensional unit ball
(the subscripts $_T$ will be omitted if clear from the context).
\end{definition}

The main theorem of the paper is then the following approximation result (for the notation concerning multiple valued functions and their
graphs we refer to \cite{DS1,DS2,DS3}). 

{\Ema \begin{theorem}\label{t:approx lip AM}
There exist constants $M,C_{21}, \beta_0, \varepsilon_{21}>0$ (depending on $m,n, Q$) with the following property. Assume that $T\in \bI_m (\R^{m+n})$ is $\bOmega$-minimal, it satisfies \eqref{e:(H)} in the cylinder $\bC_{4r}(x)$ and $E=\bE(T, \bC_{4r}(x))<\varepsilon_{21}$.
Then, there exist a map $f\colon B_r(x)\to \Iq(\R^n)$ and a closed set 
$K\subset B_r(x)$ such that
\begin{align}
\Lip (f)\leq & C_{21} E^{\beta_0},\label{lip1}
\end{align} 
\begin{equation}\label{lip2}
\bG_f\res(K\times \R^n)=T\res (K\times \R^n)\quad \text{and}\quad |B_r(x)\setminus K|\leq C_{21} E^{\beta_0}\bigl(E+r^2 \bOmega^2 \bigr) r^m\,,
\end{equation}
\begin{equation}\label{lip3}
\Bigl|\norm{T}{}(\bC_r(x))-Q\omega_m r^m -\frac{1}{2}\int_{B_r(x)}|Df|^2\Bigr|\leq C_{21} E^{\beta_0}\bigl(E+r^2 \bOmega^2\bigr)r^m\, ,
\end{equation}
\begin{align}
{\rm osc}(f)\leq C_{21} {\bh}(T, \bC_{4r}(x)).
\end{align}
\end{theorem}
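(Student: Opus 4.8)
We now sketch the strategy we would follow. The plan is to adapt the proof of the corresponding statement for area minimizing currents given in \cite{DS3}, isolating the two points where minimality is genuinely used — the higher integrability of the excess and the Taylor expansion of the mass — and carefully tracking the new error terms produced by the weaker inequality \eqref{e:Omega}. The inputs from \cite{DS1,DS2} (theory of $Q$-valued functions, interior higher integrability of the gradient of $\D$-minimizers) and from \cite{DSS1} (monotonicity and height bound for $\bOmega$-minimal currents) will be taken as black boxes.

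First, working only with Assumption \ref{ipotesi_base} and the smallness of $E$ — and hence with no minimality at all — I would construct a preliminary Lipschitz approximation: a map $f_0\colon B_{2r}(x)\to\Iq(\R^n)$ with $\Lip(f_0)\le 1$, a closed set $K_0\subset B_{2r}(x)$ with $|B_{2r}(x)\setminus K_0|\le C\,\e_T(B_{2r}(x))$, $\bG_{f_0}\res(K_0\times\R^n)=T\res(K_0\times\R^n)$ and $\int_{B_{2r}(x)}|Df_0|^2\le C\,\e_T(B_{2r}(x))$. This is the classical construction via the slicing estimates for $T$ (available because $\de T\res\bC_{4r}(x)=0$) together with a maximal function stopping-time argument, exactly as in \cite{DS3}. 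Running a further dyadic stopping time — subdividing $B_r(x)$ and stopping on a cube as soon as the excess density there exceeds a small threshold $\sim M E^{2\beta_0}$, then re-extending $f_0$ over the union of the stopped cubes — produces a map $f$ with $\Lip(f)\le C E^{\beta_0}$, a closed set $K$ on which $\bG_f=T$, and, by Chebyshev's inequality applied to $\e_T$, the \emph{linear} bad-set bound $|B_r(x)\setminus K|\le C E^{1-2\beta_0}r^m$. This already establishes \eqref{lip1}. The last inequality of the theorem is also obtained at this stage: by construction the values of $f$ on $K$ lie in the vertical projection of $\supp(T)\cap\bC_{4r}(x)$, which has spread at most $\bh(T,\bC_{4r}(x))$ by the height bound of \cite{DSS1}, and the Lipschitz extension over the small complement increases the oscillation only by a controlled multiple of the same quantity; here $\bOmega$ plays no role.

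The main point, and the step I expect to be the hardest, is to replace the linear bound of the previous step by the super-linear estimate in \eqref{lip2}, and simultaneously to prove \eqref{lip3}. Following \cite{DS3}, both follow from a \emph{higher integrability} statement for $\bd_T$, i.e.\ a decay of the measure of the super-level sets $\{\bd_T>t\}$ faster than the Chebyshev rate $t^{-1}$. This in turn rests on a harmonic comparison. From \eqref{e:Omega} one first extracts the first-variation inequality $|\delta T(X)|\le\bOmega\int|X|\,d\|T\|$, so that $T$ has generalized mean curvature bounded by $\bOmega$; equivalently, on each small cylinder $\bC_\rho(y)\subset\bC_{4r}(x)$ one can insert in \eqref{e:Omega} a current $S$ supported in a thin region near $\de\bC_\rho(y)$, with $\mass(S)\le C\rho^{m+1}$, such that $T+\de S$ agrees with $T$ outside $\bC_\rho(y)$ and with the graph of a $\D$-minimizing $Q$-valued function inside. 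The resulting almost-$\D$-harmonicity of the intermediate-scale Lipschitz approximation of $T$ — whose inhomogeneity has natural scale-invariant size $\bOmega\rho$ on $\bC_\rho(y)$ — combined with the higher integrability of the gradient of $\D$-minimizers from \cite{DS1,DS2} and a reverse-Sobolev/Gehring argument, yields the required super-linear decay of the super-level sets of $\bd_T$; since the mean-curvature contribution enters quadratically in the excess, it produces the scale-invariant error $\bOmega^2\rho^2$, which after summation over scales becomes the term $r^2\bOmega^2$ in \eqref{lip2}--\eqref{lip3}. The genuine difficulty is precisely this bookkeeping: although the skeleton of \cite{DS3} applies verbatim, every comparison now carries the extra defect $\bOmega\,\mass(S)$, and one must verify that these defects assemble exactly into the stated $r^2\bOmega^2$ corrections without degrading the powers of $E$.

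Finally, armed with the higher integrability, I would re-run the dyadic stopping time of the first step using it in place of Chebyshev's inequality: the stopped region then has measure at most $C E^{\beta_0}(E+r^2\bOmega^2)r^m$, provided $\beta_0$ is chosen small relative to the higher-integrability exponent, which gives the set $K$ of the theorem and \eqref{lip2}, while \eqref{lip1} is retained. The estimate \eqref{lip3} is obtained by writing $\|T\|(\bC_r(x))=\|T\|\left((B_r(x)\setminus K)\times\R^n\right)+\|\bG_f\|\left((B_r(x)\cap K)\times\R^n\right)$, Taylor expanding the $Q$-valued area integrand on $K$ so that $\|\bG_f\|\left((B_r(x)\cap K)\times\R^n\right)=Q|B_r(x)\cap K|+\tfrac12\int_{B_r(x)\cap K}|Df|^2+O\!\left(\int_{B_r(x)\cap K}|Df|^4\right)$, the quartic term being absorbed into $C E^{2\beta_0}\int_{B_r(x)}|Df|^2$ by \eqref{lip1}, and then controlling the two remaining defects $\e_T(B_r(x)\setminus K)$ and $\int_{B_r(x)\setminus K}|Df|^2$ by the higher integrability of the third paragraph together with \eqref{lip2}. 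The constants $M,C_{21},\beta_0,\varepsilon_{21}$, depending only on $m,n,Q$ and on the universal constants of \cite{DS1,DS2,DS3}, are fixed in the course of the argument.
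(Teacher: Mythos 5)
Your overall skeleton --- a preliminary Lipschitz approximation built without any minimality, a harmonic comparison leading to higher integrability of $\bd_T$, and a strong excess estimate yielding \eqref{lip2}--\eqref{lip3} --- coincides with the paper's, and your first, second and fourth paragraphs are essentially correct. The genuine gap sits in your third paragraph, at the one point where almost-minimality is actually invoked. If you compare $T$ on a cylinder $\bC_\rho (y)$ with a harmonic competitor and fill the difference with a current $S$ satisfying only the crude bound $\mass (S)\leq C\rho^{m+1}$, then the defect supplied by \eqref{e:Omega} is $\bOmega\,\rho^{m+1}$, i.e.\ $\bOmega\rho$ after dividing by $\rho^m$. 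This is \emph{linear} in $\bOmega$ and carries no power of $E$, so it cannot be absorbed into $\eta\, \e_T (B_{4\rho}(y)) + C\bOmega^2\rho^{m+2}$: that would require $\bOmega\rho \lesssim E + \bOmega^2\rho^2$, which fails whenever $E \ll \bOmega\rho \ll 1$. Your assertion that ``the mean-curvature contribution enters quadratically in the excess'' is exactly the statement that needs proof, and the first-variation inequality $|\delta T (X)|\leq \bOmega \int |X|\, d\|T\|$ is too weak to deliver it.

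The paper closes this gap with the Homotopy Lemma (Lemma~\ref{l:homotopy}), which is the genuinely new ingredient relative to \cite{DS3}: the filling $S$ is taken to be (a Lipschitz retraction of) the affine homotopy between the $E^\beta$-Lipschitz approximations $f$ of $T$ and $h$ of the competitor, so that $\mass (S)\leq C\int \cG (f,h)$; since $f=h$ on most of $\partial B_s$ and $\int (|Df|^2+|Dh|^2)\leq C E \rho^m$, the Poincar\'e inequality gives $\mass (S)\leq C\rho^{m+1}E^{\sfrac{1}{2}}$, hence a defect $C\bOmega\rho^{m+1}E^{\sfrac{1}{2}}$ instead of $C\bOmega\rho^{m+1}$. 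The quadratic term $\bOmega^2$ then arises through a dichotomy (Proposition~\ref{p:o(E)}): either $\bOmega^2 \geq \hat{\eps}^2 E$, in which case the entire excess is already bounded by $C\bOmega^2\rho^{2}\cdot\rho^m$, or $\bOmega\rho \leq \hat{\eps} E^{\sfrac{1}{2}}$, in which case the defect is at most $\hat{\eps} E\rho^m$ and the mass-minimizing argument of \cite{DS3} runs with an $o(E)$ error. A second instance of the same mechanism is needed in the strong excess estimate, where the term $\bOmega\int\cG(f,g)$ must be shown to be $\leq C\bOmega E^{\sfrac12+\gamma}\leq C E^{\gamma}(E+\bOmega^2)$. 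Without this quantitative improvement of the comparison, the ``bookkeeping'' you correctly identify as the crux does not close.
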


The proof of Theorem \ref{t:approx lip AM} will be achieved in the next three sections. The first one contains the most significant new ideas compared to the approximation of mass minimizing currents as done in \cite{DS3}: here, indeed, we show how to improve upon the almost minimal condition under the assumption that the cylindrical excess is small, thus leading to a refined estimate.
In the two subsequent sections we modify accordingly the computations of \cite{DS3} to prove Theorem~\ref{t:approx lip AM}. Finally, in the last section we show how Theorem~\ref{t:approx lip AM} applies to the currents considered in Definition~\ref{d:semicalibrated} and state for later reference the approximation result which will be used in \cite{DSS3,DSS4} to prove Theorem~\ref{t:finale}.}
From now on constants which depend only upon $m$, $n$ and $Q$ will be called {\em dimensional constants}.

\section{Homotopy lemma}

Before proving the main Lipschitz approximation theorem we need a lemma which estimates carefully the difference in mass between an $\bOmega$-almost minimizer and a competitor in terms of a power of the excess and the constant $\bOmega$.
The key idea is to choose the surface $S$ in \eqref{e:Omega} to be a (suitable perturbation of the) homotopy between
{\Ema two accurately chosen preliminary Lipschitz approximations of $T$ and $R$.
To this regard we introduce the notion of} $E^\beta$-approximation as in \cite[Definition 5.1]{DS3}.  
According to \cite[Proposition 2.2 \& Definition 5.1]{DS3} we then have

\begin{theorem}
There exist dimensional constants $\eps_0, C_{21}>0$ such that, if
$T$ is as in Theorem~\ref{t:approx lip AM} in a cylinder $\bC_{4r} (x)$, $E:=\bE(T,\bC_{4r}(x)) <\eps_0$ and $0<\beta \leq \frac{1}{2m}$, then the following holds. There is a function $u\in \Lip (B_{\sfrac{7r}{2}}(x), \Iqs)$, called {\em $E^\beta$-approximation} of $T$, such that
\[
\Lip (u)\leq C_{21}\,E^{\beta} ,
\]
\[
\bG_u \res (K\times \R^{n})= T\res (K\times \R^{n}),
\]
\begin{equation*}
\mass\big( (T- \bG_u)\res (B_{\sfrac{7r}{2}}(x)\setminus K) \big)+|B_{\sfrac{7r}{2}}(x)\setminus K|\leq C_{21}\,r^m\, E^{1-2\beta} .
\end{equation*}
\end{theorem}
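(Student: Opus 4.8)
The plan is to reproduce the Almgren--De Lellis--Spadaro Lipschitz-approximation scheme; the statement is in fact quoted verbatim from \cite[Proposition 2.2 \& Definition 5.1]{DS3}, so one may simply cite it, but here is the structure of the argument. Since $T$ satisfies \eqref{e:(H)} in $\bC_{4r}(x)$, I would first note that for a.e.\ $y\in B_{\sfrac{7r}{2}}(x)$ the slice $T_y:=\langle T,\p,y\rangle$ is a well defined $0$-dimensional integer current in $\{y\}\times\R^n$ with $\p_\sharp T_y=Q\a{y}$; hence $\mass(T_y)$ is a positive integer $\geq Q$, and $\mass(T_y)=Q$ exactly when all the multiplicities of $T_y$ are positive, in which case $T_y=\sum_{j=1}^{Q}\a{(y,q_j(y))}$ and one sets $u(y):=\sum_{j=1}^{Q}\a{q_j(y)}\in\Iqs$. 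Because $\mass(T_y)>Q$ forces $\mass(T_y)\geq Q+2$, and the coarea inequality for slices combined with Definition~\ref{d:excess} gives $\int_{B_{\sfrac{7r}{2}}(x)}(\mass(T_y)-Q)\,dy\leq\e_T(B_{4r}(x))\leq C\,E\,r^m$, the ``bad slice set'' $B^{b}$ on which $T_y$ fails to be a positive $Q$-point has $|B^{b}|\leq C\,E\,r^m$.

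Next I would introduce the restricted maximal function $\bmax\,\e_T(y):=\sup_{0<s<\sfrac{r}{8}}\e_T(B_s(y))/(\omega_m s^m)$, fix $\eta:=E^{2\beta}$, and set
\[
K:=\bigl\{y\in B_{\sfrac{7r}{2}}(x):\ \bmax\,\e_T(y)\leq\eta\bigr\}\setminus B^{b}.
\]
A standard weak-$(1,1)$/Besicovitch covering estimate for the maximal function yields $|B_{\sfrac{7r}{2}}(x)\setminus K|\leq C\eta^{-1}\e_T(B_{4r}(x))+|B^{b}|\leq C\,r^m\bigl(E^{1-2\beta}+E\bigr)\leq C\,r^m E^{1-2\beta}$, which is the measure bound in the statement. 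The hypothesis $\beta\leq\frac{1}{2m}$ enters precisely here, to keep $\eta=E^{2\beta}\geq E^{1/m}$, which is the regime in which the local flatness lemma below is applicable.

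The core of the proof is the Lipschitz estimate on $K$. Given $y,y'\in K$ with $d:=|y-y'|$ small, set $\nu:=(y-y')/d$ and consider the $1$-dimensional slices of $T$ obtained by projecting onto the $(m-1)$ horizontal directions orthogonal to $\nu$. Since $\partial T\res\bC_{4r}(x)=0$ and $\p_\sharp T=Q\a{B_{4r}(x)}$, for a.e.\ such slice the resulting $1$-current joins the $Q$-tuple over $y$ to the $Q$-tuple over $y'$; pairing the sheets by means of the local flatness lemma, its mass is at least $\sum_j\sqrt{d^2+|q_j(y)-q_{\sigma(j)}(y')|^2}$ for a suitable bijection $\sigma$. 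Subtracting the flat contribution $Q\,d$, summing over the $Q$ sheets, integrating over the $(m-1)$-parameter family of slicing planes and rescaling, one arrives at
\[
\cG\bigl(u(y),u(y')\bigr)^2\leq C\,d^2\bigl(\bmax\,\e_T(y)+\bmax\,\e_T(y')\bigr)\leq C\,d^2\eta=C\,(E^{\beta}d)^2,
\]
so $\Lip(u|_K)\leq C\,E^{\beta}$, and I would extend $u$ to all of $B_{\sfrac{7r}{2}}(x)$ with the same Lipschitz bound via the $\Iqs$-valued Lipschitz extension theorem of \cite{DS1}. I expect the genuine obstacle to be exactly the local flatness (De~Giorgi-type) lemma underlying this slicing step: one must show that for $y\in K$ the slice $T_y$ is indeed a positive $Q$-point, and that over every small ball $B_s(y)$ the current looks, to within an error controlled by $(s^m\,\bmax\,\e_T(y))^{1/2}$, like $Q$ nearly flat sheets, so that the pairing $\sigma$ and the length lower bound make sense. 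This is where the vanishing of $\partial T$ in $\bC_{4r}(x)$ and the smallness of the cylindrical excess are really exploited; everything else is maximal-function theory, the coarea formula for slices, and the calculus of $\Iqs$-valued maps from \cite{DS1}.

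Finally the graph identity and the mass estimate are bookkeeping. The identity $\bG_u\res(K\times\R^n)=T\res(K\times\R^n)$ follows since over $K$ the two currents have the same $0$-dimensional slices and both are $\partial$-free in $\bC_{4r}(x)$. For the mass bound (abbreviating $B:=B_{\sfrac{7r}{2}}(x)$),
\[
\mass\bigl((T-\bG_u)\res((B\setminus K)\times\R^n)\bigr)\leq\|T\|\bigl((B\setminus K)\times\R^n\bigr)+\mass\bigl(\bG_u\res((B\setminus K)\times\R^n)\bigr),
\]
where $\|T\|((B\setminus K)\times\R^n)=Q|B\setminus K|+\e_T(B\setminus K)\leq C\,r^m(E^{1-2\beta}+E)$ and, since $\Lip(u)\leq C\,E^{\beta}\leq C$, one has $\mass(\bG_u\res((B\setminus K)\times\R^n))\leq(1+C\Lip(u)^2)\,Q\,|B\setminus K|\leq C\,r^m E^{1-2\beta}$; adding $|B\setminus K|$ and using $E\leq E^{1-2\beta}$ gives the claimed bound $C\,r^m E^{1-2\beta}$.
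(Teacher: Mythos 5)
The paper offers no proof of this statement at all: it is imported wholesale from \cite[Proposition 2.2 \& Definition 5.1]{DS3}, so your observation that one may simply cite it is exactly what the authors do. Your sketch of the underlying argument is structurally the same as the one in \cite{DS3} --- truncation by the non-centered maximal function of $\e_T$ at level $E^{2\beta}$, the weak-$(1,1)$ estimate for $|B_{\sfrac{7r}{2}}(x)\setminus K|$, a Lipschitz bound $C E^{\beta}$ on the good set followed by the $\Iqs$-valued Lipschitz extension of \cite{DS1}, and the Taylor expansion of the area of $\bG_u$ for the mass bound --- but the mechanism you propose for the Lipschitz estimate is different. You use the classical De Giorgi--Almgren device of slicing along one-dimensional directions and pairing the $Q$ sheets over $y$ and $y'$ to extract $\sum_j\sqrt{d^2+|q_j(y)-q_{\sigma(j)}(y')|^2}-Qd$ from the excess; \cite{DS2,DS3} instead prove a Jerrard--Soner-type BV estimate for the zero-dimensional slice map $y\mapsto\langle T,\p,y\rangle$, which yields $\cG(u(y),u(y'))\leq C E^{\beta}|y-y'|$ for Lebesgue points of $K$ directly and avoids the decomposition of one-dimensional integral currents into curves and the choice of the pairing $\sigma$ (precisely the ``genuine obstacle'' you identify). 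Your route is viable but requires the height bound and the structure theorem for $1$-currents to justify the pairing; the BV route buys a cleaner argument in arbitrary codimension and for $Q$-valued sheets. Two points in your sketch are stated too quickly: (i) the identity $\bG_u\res(K\times\R^n)=T\res(K\times\R^n)$ does not follow merely from equality of the slices over $K$ together with $\partial T\res\bC_{4r}(x)=0$, since an integer rectifiable current can carry ``vertical'' mass projecting onto a null set without affecting its slices; one must additionally rule out such parts over $K$ (in \cite{DS3} this is part of the slice-versus-graph comparison, using that the total mass over $K$ is accounted for by $\int_K\mass(\langle T,\p,y\rangle)\,dy$ up to the excess); (ii) your explanation of where $\beta\leq\frac{1}{2m}$ enters is a plausible heuristic, but in \cite{DS3} the restriction is tied to the size $E^{(1-2\beta)/m}$ of the balls covering the bad set, across which the extension must not destroy the Lipschitz constant $E^{\beta}$. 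Neither issue changes the architecture of the proof.
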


By using the $E^\beta$-approximations we get the following improvement of the $\bOmega$-minimality in the case of small excesses.

\begin{lemma}[Homotopy Lemma]\label{l:homotopy} Let $T$ be an $\bOmega$-almost minimizer which satisfies \eqref{e:(H)}. There are positive dimensional constants $\eps_{22}$ and $C_{25}$ such that,
if $E =\bE(T,\bC_{4r}(x))\leq \eps_{22}$, then the following holds.
For every  $R\in \bI_m (\bC_{3r} (x))$ such that $\de R=\de (T\res \bC_{3r} (x))$, we have
\begin{equation}\label{e:differenza1}
\|T\| (\bC_{3r} (x))\leq \mass(R)+C_{25} r^{m+1} \bOmega E^{\sfrac{1}{2}}\, .
\end{equation}
Moreover, let $\beta\leq \frac{1}{2m}$, $s\in ]r,2r[$, $R = \bG_g\res \bC_s (x)$ for some Lipschitz map $g\colon B_s\to \I{Q}(\R^n)$ with $\Lip (g)\leq 1$ and $f$ be the $E^{\beta}$-approximation of $T$ in $\bC_{3r}$. If $f=g$ on $\partial B_s$ and $P\in \bI_m (\R^{m+n})$ is such that $\partial P =  \partial ((T-\bG_f)\res \bC_s)$, then
\begin{equation}\label{e:differenza2}
\|T\| (\bC_s (x))\leq \mass (\bG_g) + \mass (P) + C_{25} \bOmega \Big(E^{\sfrac{3}{4}} r^{m+1} + \left(\mass (P)\right)^{1+\sfrac{1}{m}} +   \int_{B_s (x)} \cG(f,g)\Big) \, .
\end{equation}
\end{lemma}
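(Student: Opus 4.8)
\medskip
\noindent\emph{Plan of proof.} The plan is to exhibit, for any competitor admissible in \eqref{e:differenza1} or \eqref{e:differenza2}, an explicit $S\in\bI_{m+1}(\R^{m+n})$ with compact support satisfying $\partial S=(\text{competitor})-T$ and with $\mass(S)$ bounded by the claimed right-hand side, and then to apply $\bOmega$-minimality \eqref{e:Omega}. The recurring tool is the \emph{homotopy current} between the graphs of two Lipschitz $Q$-valued maps: if $w_0,w_1\in\Lip(B_\rho(x),\Iqs)$ agree on $\partial B_\rho(x)$, then, matching the sheets and interpolating linearly via the multivalued calculus of \cite{DS1}, one obtains $\bH\in\bI_{m+1}(\R^{m+n})$ with compact support, $\partial\bH=(\bG_{w_1}-\bG_{w_0})\res\bC_\rho(x)$ and $\mass(\bH)\le C(1+\Lip(w_0)+\Lip(w_1))^m\int_{B_\rho(x)}\cG(w_0,w_1)$. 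I will also use that an $E^\beta$-approximation $u$ of $T$ has $\int|Du|^2\le Cr^mE$: on the good set $|Du|$ is controlled by the tilt, hence by $\e_T$, while on the bad set $\int|Du|^2\le\Lip(u)^2|B\setminus K|\le CE^{2\beta}\cdot r^mE^{1-2\beta}=Cr^mE$.

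For \eqref{e:differenza2}, since $f=g$ on $\partial B_s(x)$ and (by slicing) $\partial\bigl((T-\bG_f)\res\bC_s(x)\bigr)=\langle T-\bG_f,\p,s\rangle=\partial P$, the current
\[
T'':=T\res\bigl(\R^{m+n}\setminus\bC_s(x)\bigr)+\bG_g\res\bC_s(x)+P
\]
has $\partial T''=\partial T$ and compact support, hence is admissible in \eqref{e:Omega}. Writing $T''-T=(\bG_g-T)\res\bC_s(x)+P=Z_1+Z_2$ with $Z_1:=(\bG_g-\bG_f)\res\bC_s(x)$ and $Z_2:=(\bG_f-T)\res\bC_s(x)+P$, both are integral cycles ($Z_1$ because $f=g$ on $\partial B_s(x)$, $Z_2$ because $\partial((T-\bG_f)\res\bC_s(x))=\partial P$). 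I would fill $Z_1$ by the homotopy current $\bH$ between $\bG_f$ and $\bG_g$, so that (as $\Lip(f)\le CE^\beta\le1$, $\Lip(g)\le1$) $\mass(\bH)\le C\int_{B_s(x)}\cG(f,g)$, and fill $Z_2$ by the Federer--Fleming isoperimetric inequality by $S_2$ with $\mass(S_2)\le C\,\mass(Z_2)^{1+1/m}$; since $\mass\bigl((T-\bG_f)\res\bC_s(x)\bigr)\le Cr^mE^{1-2\beta}$ and $\beta\le\tfrac1{2m}$ forces $(1-2\beta)\tfrac{m+1}{m}\ge1-\tfrac1{m^2}\ge\tfrac34$, one gets (using $E<1$)
\[
\mass(S_2)\le C\bigl(r^mE^{1-2\beta}+\mass(P)\bigr)^{1+1/m}\le Cr^{m+1}E^{3/4}+C\,\mass(P)^{1+1/m}.
\]
Then $S:=\bH+S_2$ works, and \eqref{e:Omega}, together with $\mass(T'')\le\mass\bigl(T\res(\R^{m+n}\setminus\bC_s(x))\bigr)+\mass(\bG_g)+\mass(P)$ and $\mass(T)=\|T\|(\bC_s(x))+\mass\bigl(T\res(\R^{m+n}\setminus\bC_s(x))\bigr)$, gives exactly \eqref{e:differenza2}.

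For \eqref{e:differenza1}, I may assume $\mass(R)\le\|T\|(\bC_{3r}(x))$ (otherwise nothing to prove); then $R$ satisfies Assumption \ref{ipotesi_base} in $\bC_{3r}(x)$ with $\bE(R,\bC_{3r}(x))\le CE$ and, replacing $R$ by the mass-minimizer with boundary $\partial(T\res\bC_{3r}(x))$ in $\bC_{3r}(x)$ (which only decreases $\mass(R)$) and truncating at a height of order $\bh(T,\bC_{4r}(x))$ (which does not affect $\partial R$), I may also assume $R$ area-minimizing there and of small height. I would take $E^\beta$-approximations $u$ of $T$ and $v$ of $R$ and, at a radius $s$ so close to $3r$ that the residual annular excess $\e_T(B_{3r}(x)\setminus B_s(x))$ and the masses of the slices of $T-\bG_u$ and $R-\bG_v$ at $s$ are suitably small, build the competitor which equals $T$ outside $\bC_s(x)$, equals $\bG_v$ on a slightly smaller cylinder, and is joined to $\bG_u$ in a thin collar near $\partial\bC_s(x)$; the corresponding $S$ is the homotopy current $\bH$ between $\bG_u$ and $\bG_v$ plus isoperimetric fillings of the low-mass error cycles coming from the slicing, the approximation errors, the truncation and the collar. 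The dominant term is $\mass(\bH)$, and this is where the exponent $1/2$ enters:
\[
\mass(\bH)\le C\!\int_{B_s(x)}\!\cG(u,v)\le Cr^{m/2}\Bigl(\int_{B_s(x)}\!\cG(u,v)^2\Bigr)^{1/2}\le Cr^{m/2}\Bigl(r^2\!\int_{B_s(x)}\!\bigl(|Du|^2+|Dv|^2\bigr)\Bigr)^{1/2}\le Cr^{m+1}E^{1/2},
\]
the second step being the Poincar\'e inequality for the Lipschitz function $x\mapsto\cG(u(x),v(x))$ (which satisfies $|D\cG(u,v)|\le|Du|+|Dv|$ and is small on $\partial B_s(x)$ because $T$ and $R$ share the boundary $\partial(T\res\bC_{3r}(x))$), and the last the energy bounds above. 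Inserting $\mass(S)\le Cr^{m+1}E^{1/2}$ in \eqref{e:Omega} and adding the controlled annular term $\|T\|(\bC_{3r}(x)\setminus\bC_s(x))$ then yields \eqref{e:differenza1}.

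I expect the main obstacle to be not a single estimate but the multivalued homotopy current (its construction and the mass bound $\mass(\bH)\lesssim\int\cG$ up to Lipschitz factors) together with—especially in \eqref{e:differenza1}—the choice of the intermediate radius and the bookkeeping of the slicing, truncation and approximation errors near $\partial\bC_{3r}(x)$, where the Lipschitz approximation deteriorates, so that each of them is absorbed into $C_{25}\bOmega\,r^{m+1}E^{1/2}$.
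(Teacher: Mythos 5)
Your treatment of \eqref{e:differenza2} is essentially the paper's argument: the admissible competitor $T''=T\res(\R^{m+n}\setminus\bC_s)+\bG_g\res\bC_s+P$, the homotopy current filling $(\bG_g-\bG_f)\res\bC_s$ with mass $\lesssim\int\cG(f,g)$ (in the paper this is $\bT_H$ from \cite[Definition 1.3, Theorem 2.1]{DS2}, obtained by linear interpolation in the $\xii$-coordinates followed by the retraction $\ro$, with the mass bound coming from the $Q$-valued area formula rather than from ``matching sheets''), and the isoperimetric filling of $(\bG_f-T)\res\bC_s+P$ with the exponent bookkeeping $(1-2\beta)(m+1)/m\ge 3/4$. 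This half is fine.

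For \eqref{e:differenza1} there is a genuine gap at the one point you dismiss as bookkeeping: the control of $\cG(u,v)$ on $\partial B_s$. You work at a radius $s<3r$ and assert that $\cG(u,v)$ is small there ``because $T$ and $R$ share the boundary $\partial(T\res\bC_{3r})$''. That boundary condition only constrains the slice at radius exactly $3r$; for $s<3r$ the slices of $T$ and of $R$ --- hence the values of their Lipschitz approximations on $K\cap\partial B_s$ --- can be completely different, since $T-R$ is merely a cycle in $\bC_{3r}$ and need not vanish on any annulus. Without this control both the lateral term of the homotopy current ($\mass(\bT_G)\lesssim\int_{\partial B_s}\cG(u,v)$) and the boundary term in your Poincar\'e inequality are unbounded, and the $E^{\sfrac12}$ estimate collapses. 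The paper's device, which is the real content of this half of the lemma and is missing from your plan, is to extend the competitor across the \emph{outer} annulus: set $R':=R+T\res(\bC_{4r}\setminus\bC_{3r})$, check that $R'$ satisfies \eqref{e:(H)} (because $\p_\sharp(T-R')$ is a boundaryless multiple of $\a{B_{4r}}$, hence zero) with $\bE(R',\bC_{4r})\le E$ thanks to the harmless reduction $\mass(R)\le\|T\|(\bC_{3r})$, take the $E^\beta$-approximations $f$ of $T$ and $h$ of $R'$ in $\bC_{\sfrac{7r}{2}}$, and choose $s\in(3r,\sfrac{7r}{2})$ by Fubini so that $|\partial B_s\setminus K|\le Cr^{m-1}E^{1-2\beta}$ and the slices of $T-\bG_f$ and $R'-\bG_h$ at $s$ have small mass. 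Since $T=R'$ on $\bC_{4r}\setminus\bC_{3r}$, the two approximations coincide on $K\cap\partial B_s$, and on the small exceptional set the Lipschitz bounds give $\int_{\partial B_s}\cG(f,h)\le Cr^mE$; with that in hand your Poincar\'e step and the final assembly of fillings go through. Your auxiliary reductions (replacing $R$ by a mass minimizer, truncating in height) are unnecessary and do not address this issue.
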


\begin{proof} We will first show \eqref{e:differenza1}: in fact \eqref{e:differenza2} follows easily from a portion of the same argument, as it will be highlighted at the end.

Without loss of generality we assume $x=0$. If $\|T\| (\bC_{3r})\leq \mass(R)$ then there is nothing to prove. Hence we can suppose 
\begin{equation}\label{mass}
\mass(R)\leq\|T\| (\bC_{3r}).
\end{equation}  
Define the current $R'\in \bI_m (\bC_{4r})$ by $R':=R+T\res (\bC_{4r}\setminus \bC_{3r})$.
Observe that $\partial (T-R') = 0$. So $\partial (\p_\sharp (T-R')) = 0$. On the other hand 
$\p_\sharp(T-R')=k\a{B_{4r}}$ for some constant $k$ and thus we conclude $\p_\sharp (T-R')=0$.
Therefore $R'$ satisfies \eqref{e:(H)}.
Moreover we notice that, thanks to \eqref{mass}, the cylindrical excess of $R'$ enjoys the following bound:
\[
\bE(R', C_{4r})=\frac{\mass(R')}{\omega_m (4r)^m}-Q\stackrel{(\ref{mass})}{\leq} \frac{\mass(T)}{\omega_m (4r)^m}-Q=\bE(T, C_{4r})=:E.
\]
Let $f,h\colon B_{\sfrac{7r}{2}}\to \Iq(\R^n)$ be the $E^\beta$-Lipschitz approximations of $T$ and $R'$ respectively, in the cylinders $\bC_{\sfrac{7r}{2}}$ {\Ema for some $\beta \in (0, \sfrac{1}{2m}]$}. Then there exist sets $K_T, K_{R'}\subset B_{\sfrac{7r}{2}}(x)$ such that $T\res (K_T\times \R^n) = \bG_f \res (K_T \times \R^n)$ and $R'\res (K_{R'}\times \R^n)= \bG_h \res (K_{R'}\times \R^n)$, fulfilling the following estimates:
\begin{equation}\label{fh1}
\mass((T-\bG_f)\res C_{\sfrac{7r}{2}})\leq C_{21} r^m E^{1-2 \beta}\quad \text{and}\quad\mass((R'-\bG_h)\res C_{\sfrac{7r}{2}})\leq C_{21} r^m E^{1-2 \beta}, 
\end{equation}
\begin{equation}\label{fh2}
|B_{\sfrac{7r}{2}}\setminus K_T|\leq C_{21} r^m E^{1-2\beta}\quad \text{and}\quad|B_{\sfrac{7r}{2}}\setminus K_{R'}|\leq C_{21} r^m E^{1-2\beta},
\end{equation}
\begin{equation}\label{fh3}
\Lip (f)\leq C_{21} E^\beta\quad \text{and}\quad\Lip (h)\leq C_{21} E^\beta.
\end{equation}
Next we set $K:=K_T\cap K_{R'}$ and we notice that by (\ref{fh2})
\begin{equation}\label{fh4}
|B_{\sfrac{7r}{2}} \setminus K|\leq {\Ema C} r^m E^{1-2\beta}.
\end{equation}
Let $|\cdot|$ be the function $|(x,y)|:=|x|_2$ for every $(x,y)\in \R^{m}\times \R^n$, where $|x|_2$ is the Euclidean norm of the vector $x$. By the slicing theory, (\ref{fh1}), (\ref{fh4}) and Fubini's Theorem there exists 
$s\in (3r, 7/2 r)$ such that
\begin{equation}\label{fh5}
\mass(\langle T-\bG_f, |\cdot|, s\rangle) + \mass(\langle R'-\bG_h,|\cdot|,s\rangle)\leq {\Ema C } r^{m-1} E^{1-2 \beta}
\end{equation}
and
\begin{equation}\label{fh6}
|\de B_{s} \setminus K|\leq {\Ema C } r^{m-1} E^{1-2\beta}\, .
\end{equation}
By the Isoperimetric Inequality, there exists $P_T, P_R \in \bI_m(\R^{m+n})$ such that
\[
\partial P_T=\langle T-\bG_f, |\cdot|, s\rangle\qquad \partial P_R = \langle R' - \bG_h, |\cdot|, s \rangle
\]
and
\begin{align*}
\mass(P_T) + \mass (P_R) &\leq C \bigl(\mass(\langle T-\bG_f, |\cdot|, s\rangle\bigr)^{\sfrac{m}{(m-1)}} + C\bigl(\mass(\langle R'-\bG_h, |\cdot|, s\rangle\bigr)^{\sfrac{m}{(m-1)}}\\
&\leq C r^m E^{ m (1-2\beta)/(m-1)}.
\end{align*}
{\Ema Therefore,} we can conclude that
\begin{equation}\label{fh7}
\partial ((T-\bG_f)\res \bC_s) =\de P_T \qquad \partial ((R'-\bG_h)\res \bC_s) = \de P_R
\end{equation}
{\Ema and, since $\beta\leq \frac{1}{2m}$, also}
\begin{equation}\label{fh8}
\mass(P_T) + \mass (P_R) \leq C r^{m} E\, 
\end{equation}
Next consider the functions
\[
f':=\xii\circ f\colon B_{\sfrac{7r}{2}}\to \cQ\subset \R^{N (Q,n)}\quad \text{and}\quad h':=\xii\circ h\colon B_{\sfrac{7r}{2}} \to \cQ\subset \R^{N(Q,n)}\, . 
\]
{\Ema where $\xii:\Iqs \to \R^{N(Q, n)}$ is the bilipschitz embedding in \cite[Section 2.1]{DS1},} and the homotopy between them, defined by
\[
{\Ema \tilde{H} \colon  [0,1] \times B_{\sfrac{7r}{2}} \ni (t,x)}\to (x, tf'(x)+(1-t)h'(x))\in \R^m\times\R^N\,.
\]
Consider the Lipschitz map 
\[
\phi\colon \R^m \times \R^N \ni(x,y)\to (x, \xii^{-1}(\ro( y )))\in \R^m\times \Iq(\R^n)
\]
{\Ema where $\ro:\R^{N(Q, n)} \to \cQ := \xii(\Iqs)$ is the Lipschitz retraction in \cite[Section 2.1]{DS1},} and define $H := \phi \circ \tilde{H}$. $H$ can be seen as a $Q$-valued map $H:   [0,1]\times B_{\sfrac{7r}{2} \to \Iq (\R^{m+n})}$. 
Without changing notation for $H$ we restrict it to $[0,1]\times B_s$ and
following the notation of \cite[Definition 1.3]{DS2} we define $S:= \bT_H$. If we set 
$G := H|_{ [0,1]\times \partial B_s }$ we can
use \cite[Theorem 2.1]{DS2} to conclude that
\begin{equation}\label{fh11}
\de S= (\bG_f-\bG_h)\res \bC_s {\Ema -} \bT_G= (\bG_f - \bG_h)\res \bC_s {\Ema -} P\, ,
\end{equation}
where $P:= \bT_G$.
We now want to estimate $\mass (S)$ and $\mass (P)$ and we will do it using the $Q$-valued area formula in \cite[Lemma 1.9]{DS2}. We start with $\mass (S)$. We fix a point of differentiability $p$ where $DH = \sum \a{DH_i}$. On $[0,1]\times B_s$ we use the coordinates $(t,x)$ and on the target space $\R^{m+n}$ the coordinates $(x,y)$. Let $p = (t_0,x_0)$. It is then obvious that the matrix $DH_i$ can be decomposed as 
\[
DH_i(p)={\Ema\left(\begin{array}{cc}
0_{m\times 1} & I_{m\times m}\\
v_{n\times 1} & A_{n\times m} \, .
\end{array}\right)}
\]
where the matrices $A$ and $v$ can be bound using the following observation. If we consider the map $t\mapsto \Phi (t) := H ( t, x_0)$ and $x\mapsto \Lambda (x) :=  {\Ema t_0 f'(x) + (1-t_0) h'(x)}$, we then have $|v|\leq C \Lip (\Phi)$ and $|A| \leq C \Lip (\Lambda)$, where the constant $C$ depends only on $n$ and $Q$. On the other hand, it is easy to see that $\Lip (\Phi) \leq C \cG (f(x_0), h (x_0))$ and {\Ema $\Lip (\Lambda) \leq C (\Lip (h) + \Lip (f)) \leq E^{\beta}$.} Thus we can estimate
\[
\bJ H_i {\Ema (p)} := \sqrt{\det(DH_i^*{\Ema (p)}\cdot DH_i{\Ema (p)})} \leq C \cG (f(x_0), {\Ema h} (x_0))\, .
\]
Using \cite[Lemma 1.9]{DS2} we then conclude
\[
\mass (S) \leq C \int_{B_s} \cG (f,h)
\]
and, arguing in a similar fashion,
\[
\mass (P) \leq C \int_{\partial B_s} \cG (f,h)\, .
\]
 Observe that $f$ and $h$ coincide, respectively, with the slices of the currents $T$ and $R'$ on any $x_0\in K$. On the other hand, $s>3r $ and $T\res \bC_{4r}\setminus \bC_{3r} = R' \res \bC_{4r}\setminus \bC_{3r}$. We thus conclude that $h = f$ on 
$K\cap \partial B_s$. Let $x\in \partial B_s\setminus K$. By (\ref{fh6}), there exists $x_0\in K\cap \partial B_s$ such that $|x-x_0|\leq Cr E^{(1-2\beta)/(m-1)} = C r E^{2\beta}$ (recall that $\beta {\Ema \leq } \frac{1}{2m}$). Thus
\begin{align*}
\cG (f(x), h(x))&\leq (\Lip( f) + \Lip (h)) \, |x-x_0|\leq C r E^{3 \beta}\, ,
\end{align*}
and so we conclude
\begin{equation}\label{e:molto_piccolo}
\mass (P)\leq C \int_{\partial B_s} \cG (f,h) \leq C r E^{3\beta} |\partial B_s \setminus K | \leq C r^m E^{1+\beta}\leq C r^m E\, .
\end{equation}
On the other hand, we recall that, by a standard variant of the Poincar\'e inequality {\Ema (cf., for example, \cite[4.4.7]{Ziemer})},
\begin{align}
&\int_{B_s} \cG (f, h) \leq C r \|\cG (f,h)\|_{L^1 (\partial B_s)} + C r \|D (\cG (f,h))\|_{L^1 (B_s)}\nonumber\\
\stackrel{\eqref{e:molto_piccolo}}{\leq}& C r^{m+1} E + C r^{1+\sfrac{m}{2}} \left(\int (|Df|^2 + |Dh|^2)\right)^{\sfrac{1}{2}}
\leq C r^{m+1} E^{\sfrac{1}{2}} \, .
\end{align}
Thus,
\begin{equation}\label{fh13}
(\bG_f-\bG_h)\res \bC_s =\partial S + P
\end{equation}
with
\begin{equation}\label{fh14}
\mass(P)\leq C r^m E \quad \text{and}\quad \mass(S)\leq C r^{m+1} E^{\sfrac{1}{2}}.
\end{equation}
Now observe that 
\[
0 = \partial (T-R') =  \partial((\bG_f- \bG_h)\res \bC_s) + \partial (P_T - P_R) = \partial\partial S  +  \partial P + \partial (P_T - P_R)\, .
\]
Hence, $\partial (P+P_T - P_R) =0$ and, by the isoperimetric inequality, there is an $S'$ with $\mass (S') \leq C r^{m+1} E^{1+\sfrac{1}{m}}$ and $\partial S' = P+P_T- P_R$. Additionally, again using the isoperimetric inequality, there are currents $S_T$ and $S_R$ such that 
\begin{align*}
\partial S_T &= (T-\bG_f)\res \bC_s - P_T\\
\partial S_R &= (R'-\bG_h)\res \bC_s - P_R
\end{align*}
and
\begin{align*}
\mass (S_T) \leq C \left(\|T-\bG_f\| (\bC_s) + \mass (P_T)\right)^{\sfrac{(m+1)}{m}} \leq C E^{\sfrac{3}{4}} r^{m+1}\\
\mass (S_R) \leq C  \left(\|{\Ema R' }-\bG_h\| (\bC_s) + \mass (P_R)\right)^{\sfrac{(m+1)}{m}} \leq C E^{\sfrac{3}{4}} r^{m+1}\, .
\end{align*}
In the latter inequalities we have used $\|{\Ema R' }-\bG_h\| (\bC_s) + \|T-\bG_f\| (\bC_s)\leq C E^{1-2\beta}r^m$: in particular
$(1-2\beta) (m+1)/m {\Ema \geq } 1- 1/m^2 \geq 3/4$; observe that this estimate explains the exponent of $E$ in the third summand of the right hand side of \eqref{e:differenza2}.

Thus, setting $S'' = S+S_T-S_R+S'$ we finally achieve
$(T-R') \res \bC_{s} = \partial S''$
and $\mass (S'') \leq C r^{m+1} E^{\sfrac{1}{2}}$. Recalling that $s>3r$ and that $R' = R + T \res (\bC_{4r}\setminus \bC_{3r})$ we conclude
$\partial S'' = (T-R) \res \bC_{3r}$.  
Applying now the $\bOmega$-minimality of $T$ we conclude
\[
\|T\| (\bC_{3r}) \leq \mass (R) + C_{25} r^{m+1} \bOmega E^{\sfrac{1}{2}}\, .
\]
For the proof of \eqref{e:differenza2} we conclude with the same computations, except that this time $f=g$ on $\partial B_s$ and the current $R$ is already given by $\bG_g \res \bC$. The modifications to the argument are then straightforward, given the remark of the previous paragraph. 
\end{proof}

\section{Harmonic approximation and gradient $L^p$ estimates}

In this and in the next section we follow largely \cite{DS3} with minor modifications: on the one hand we have the additional $\bOmega$-error terms, but on the other hand the ambient Riemannian manifold is the {\Ema E}uclidean space. Thus the arguments are somewhat less technical.

\subsection{Harmonic Approximation}
In this subsection we prove that  if $T$ is an almost minimizer then its $E^{\beta}$-Lipschitz approximation is close to a $\D$-minimizing function $w$ {\Ema with estimates which are infinitesimal in the excess.} 

\begin{theorem}[First harmonic approximation]\label{t:o(E)}
For every $\eta_1, \delta>0$ and every $\beta\in (0, \frac{1}{2m})$, there exists a constant $\eps_{23}>0$ with the following property.
Let $T$ be an $\bOmega$-almost minimizer which satisfies Assumption \ref{ipotesi_base} in $\bC_{4{\Ema r_0}}(x)$ .
If $E =\bE(T,\bC_{4{\Ema r_0}}(x))\leq \eps_{23}$ and ${\Ema r_0}\,\bOmega \leq \eps_{23}E^{{\sfrac{1}{2}}}$, then the
$E^{\beta}$-Lipschitz approximation $f$ in $\bC_{3{\Ema r_0}}(x)$ satisfies
\begin{equation}\label{e:few energy}
\int_{B_{2{\Ema r_0}} (x)\setminus K}|Df|^2\leq \eta_1 E\,\omega_m\,(4\,{\Ema r_0})^m = \eta_1\,\e_T(B_{4{\Ema r_0}}(x)).
\end{equation}
Moreover, there exists a $\D$-minimizing function $w$
such that
\begin{gather}
{\Ema r_0}^{-2} \int_{B_{2{\Ema r_0}}(x)}\cG(f,w)^2+
\int_{B_{2{\Ema r_0}}(x)}\big(|Df| - |Dw|\big)^2 \leq \eta_1 E \,\omega_m\,(4\,{\Ema r_0})^m=
\eta_1\, \e_T(B_{4{\Ema r_0}}(x))\, ,\label{e:quasiarm}\\
\int_{B_{2{\Ema r_0}}(x)} |D(\etaa\circ f) - D (\etaa\circ w)|^2 \leq \eta_1 E \,\omega_m\,(4\,{\Ema r_0})^m = \eta_1\, \e_T (B_{4{\Ema r_0}}(x))\, .\label{e:quasiarm_media}
\end{gather}
\end{theorem}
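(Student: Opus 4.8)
The plan is to adapt the blow-up (compactness) scheme of \cite{DS3}; the genuinely new point is the control of the terms carrying the factor $\bOmega$ produced by the Homotopy Lemma~\ref{l:homotopy}. First, translating and rescaling, I reduce to $x=0$ and $r_0=1$ (under $y\mapsto y/r_0$ an $\bOmega$-minimal current becomes $(\bOmega r_0)$-minimal, the cylindrical excess is invariant, and, with the induced rescaling $f\mapsto f(r_0\,\cdot)/r_0$, the three conclusions and the hypothesis $r_0\bOmega\le\eps_{23}E^{1/2}$ are scale invariant). I also record the a priori bound $\int_{B_3}|Df|^2\le C\,\e_T(B_4)$, which uses no minimality: on the good set, $\|\bG_f\|(K\times\R^n)=\|T\|(K\times\R^n)$, so the area formula (recalling $\Lip(f)\le CE^{\beta}$) gives $\tfrac12(1-o(1))\int_K|Df|^2\le\|T\|(K\times\R^n)-Q|K|=\e_T(K)\le\e_T(B_4)$ — here $\e_T$ is a nonnegative, monotone measure thanks to $\p_\sharp T=Q\a{B_4}$ — while $\int_{B_3\setminus K}|Df|^2\le\Lip(f)^2|B_3\setminus K|\le CE$ by the $E^\beta$-approximation estimates.

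Assume, by contradiction, that the statement fails for some $\eta_1,\delta>0$ and $\beta\in(0,\tfrac1{2m})$: there are $\bOmega_k$-minimizers $T_k$ as in Assumption~\ref{ipotesi_base} on $\bC_4$ with $E_k:=\bE(T_k,\bC_4)\to0$ and $\bOmega_k\le\eps_kE_k^{1/2}$, $\eps_k\to0$, whose $E_k^{\beta}$-approximations $f_k$ (good sets $K_k$) violate one of \eqref{e:few energy}, \eqref{e:quasiarm}, \eqref{e:quasiarm_media} for every $\D$-minimizer. After a constant translation in the target normalizing $\int_{B_3}\etaa\circ f_k=0$, set $\bar f_k:=E_k^{-1/2}f_k$; by the a priori bound and the Poincar\'e inequality for $Q$-valued functions, $\sup_k\|\bar f_k\|_{W^{1,2}(B_3)}<\infty$, so by the compactness theorem of \cite{DS1}, up to a subsequence $\bar f_k\to g$ in $L^2(B_3,\Iqs)$ with $\int_{B_3}|Dg|^2\le\liminf_k\int_{B_3}|D\bar f_k|^2$. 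Since $E_k\to0$, the compactness and rigidity of integral currents force $\bh(T_k,\bC_{7/2})\to0$, hence $\osc(f_k)\to0$; this will be used to keep the Lipschitz constants of the competitors below $1$.

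The heart of the matter is to prove that $g$ is $\D$-minimizing and that $\bar f_k\to g$ strongly in $W^{1,2}(B_3)$. Passing to a further subsequence, fix $s_0\in(3,\tfrac72)$ with $\mass\langle T_k-\bG_{f_k},|\cdot|,s_0\rangle\le CE_k^{1-2\beta}$; the isoperimetric inequality then gives $Q_k$ with $\partial Q_k=\langle T_k-\bG_{f_k},|\cdot|,s_0\rangle$ and $\mass(Q_k)\le CE_k^{(1-2\beta)m/(m-1)}=o(E_k)$ — here the \emph{strict} inequality $\beta<\tfrac1{2m}$ is essential. Given any Lipschitz $\D$-competitor $g'$ on $B_{s_0}$ with $g'=g$ on $\partial B_{s_0}$, I would build, via the interpolation lemma of \cite{DS1} on a thin annulus $B_{s_0}\setminus B_{s_0(1-\sigma_k)}$ with $\sigma_k\to0$ chosen slowly enough (so that all error contributions below vanish), a map $g_k'$ with $g_k'=\bar f_k$ on $\partial B_{s_0}$, $\int_{B_{s_0}}|Dg_k'|^2\le\int_{B_{s_0}}|Dg'|^2+o(1)$, and such that $\hat g_k:=\sqrt{E_k}\,g_k'$ (re-centered to undo the translation) has $\Lip(\hat g_k)\to0$; the last property crucially uses $\osc(f_k)\to0$. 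Then $R_k:=\bG_{\hat g_k}\res\bC_{s_0}+Q_k$ satisfies $\partial R_k=\partial(T_k\res\bC_{s_0})$, so \eqref{e:differenza1} (in the form given by its proof, with $\bC_3$ replaced by $\bC_{s_0}$) gives
\[
\|T_k\|(\bC_{s_0})\le\mass(\bG_{\hat g_k}\res\bC_{s_0})+\mass(Q_k)+C_{25}\bOmega_k E_k^{1/2}.
\]
Here $\bOmega_k E_k^{1/2}\le\eps_kE_k=o(E_k)$ and $\mass(Q_k)=o(E_k)$, while $\mass(\bG_{\hat g_k}\res\bC_{s_0})=Q\omega_ms_0^m+\tfrac12E_k\int_{B_{s_0}}|Dg_k'|^2+o(E_k)$ (area formula, using $\Lip(\hat g_k)\to0$); combining with $\|T_k\|(\bC_{s_0})=Q\omega_ms_0^m+\e_{T_k}(B_{s_0})\ge Q\omega_ms_0^m+\e_{T_k}(K_k\cap B_{s_0})$ and $\e_{T_k}(K_k\cap B_{s_0})=\tfrac12\int_{K_k\cap B_{s_0}}|Df_k|^2+o(E_k)$ (since $T_k=\bG_{f_k}$ on $K_k$), dividing by $E_k$ and letting $k\to\infty$, then $\sigma\to0$, yields $\limsup_k\int_{K_k\cap B_{s_0}}|D\bar f_k|^2\le\int_{B_{s_0}}|Dg'|^2$ for every admissible $g'$. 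Choosing $g'$ Lipschitz and close to $g$ gives $\limsup_k\int_{K_k\cap B_{s_0}}|D\bar f_k|^2\le\int_{B_{s_0}}|Dg|^2$; combined with the lower semicontinuity $\liminf_k\int_{B_{s_0}}|D\bar f_k|^2\ge\int_{B_{s_0}}|Dg|^2$ and with the control of $\int_{B_{s_0}\setminus K_k}|D\bar f_k|^2$ — obtained, as in \cite{DS3}, from the equi-integrability generated by the strong convergence — this forces \emph{simultaneously} that $g$ is $\D$-minimizing on $B_{s_0}$ (otherwise an admissible $g'$ of strictly smaller energy would contradict the lower semicontinuity) and that $\bar f_k\to g$ strongly in $W^{1,2}(B_{s_0})$.

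Finally, with $w:=\sqrt{E_k}\,g$ (a $\D$-minimizer), the strong convergence on $B_{s_0}\supset B_2$ gives $\int_{B_2}\cG(f_k,w)^2=o(E_k)$, $\int_{B_2}(|Df_k|-|Dw|)^2=o(E_k)$ and $\int_{B_2}|D(\etaa\circ f_k)-D(\etaa\circ w)|^2=o(E_k)$, while the equi-integrability of $\{|D\bar f_k|^2\}$ together with $|B_2\setminus K_k|\le CE_k^{1-2\beta}\to0$ yields $\int_{B_2\setminus K_k}|Df_k|^2=o(E_k)$; hence all three conclusions hold for $k$ large, a contradiction. The step I expect to be the main obstacle is precisely the quantitative accounting in the previous paragraph: showing that the isoperimetric filling $Q_k$ (and the homotopy current implicit in \eqref{e:differenza1}) have mass $o(E_k)$, which is where the strict bound $\beta<\tfrac1{2m}$ enters; controlling the Lipschitz constant of the rescaled competitor, which rests on $\bh(T_k,\bC_{7/2})\to0$; and verifying that $\bOmega_k\le\eps_kE_k^{1/2}$ makes every $\bOmega$-error $o(E_k)$, so that, modulo these points, the proof reduces to the one in \cite{DS3}, including the intertwined derivation of \eqref{e:few energy} and of the strong $W^{1,2}$-convergence of the rescaled approximations.
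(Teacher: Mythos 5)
Your overall scheme --- blow-up by contradiction, normalization $\bar f_k=E_k^{-1/2}f_k$, competitors glued to $\bar f_k$ along a well-chosen sphere, and the Homotopy Lemma \eqref{e:differenza1} to absorb the almost-minimality error $C_{25}\bOmega_k E_k^{\sfrac12}\le C_{25}\eps_k E_k=o(E_k)$ --- is the same as the paper's, and your accounting of the isoperimetric fillings (where $\beta<\tfrac1{2m}$ strictly is needed) and of the $\bOmega$-terms is correct. The genuine gap is in how you obtain \eqref{e:few energy}. You derive $\int_{B_2\setminus K_k}|Df_k|^2=o(E_k)$ from ``the equi-integrability generated by the strong convergence'' of $\bar f_k$, but your proof of strong $W^{1,2}$-convergence itself lists ``the control of $\int_{B_{s_0}\setminus K_k}|D\bar f_k|^2$'' among its inputs: this is circular. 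Concretely, the competitor argument only gives $\limsup_k\int_{K_k\cap B_{s_0}}|D\bar f_k|^2\le\int_{B_{s_0}}|Dg|^2$, and together with $D\bar f_k\chi_{K_k}\rightharpoonup Dg$ (which does follow from $|B_{s_0}\setminus K_k|\to0$) this yields strong $L^2$-convergence of $D\bar f_k\chi_{K_k}$ only; it says nothing about $\int_{B_{s_0}\setminus K_k}|D\bar f_k|^2$, which a priori is merely $O(1)$ after normalization (from $\Lip(f_k)\le CE_k^{\beta}$ and $|B_{s_0}\setminus K_k|\le CE_k^{1-2\beta}$). Equi-integrability of the full sequence $|D\bar f_k|^2$ is exactly equivalent to the missing estimate, so it cannot be invoked to produce it.

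The paper closes this by a separate, direct argument which is the real content of the first half of its proof. If $\int_{B_2\setminus K_l}|Df_l|^2\ge c_1E_l$, the maximal-function construction of the $E^\beta$-approximation gives $\int_{B_2\setminus K_l}|Df_l|^2\le C_{22}\,\e_{T_l}(B_s\setminus\Gamma_l)$, whence $\e_{T_l}(\Gamma_l\cap B_s)\le\e_{T_l}(B_s)-2c_2E_l$ and, by the Taylor expansion, $\int_{\Gamma_l\cap B_s}\tfrac{|Df_l|^2}{2}\le\e_{T_l}(B_s)-c_2E_l$: the energy of $f_l$ over the region where $T_l$ coincides with its graph undershoots the excess by a definite fraction of $E_l$. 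One then builds a competitor $S_l$ (the regularized construction of \cite[pages 1854--1857]{DS3}) with $\mass(S_l)-\mass(T_l)\le-\tfrac{c_2E_l}{4}+CE_l^{1+\gamma}$, while \eqref{e:differenza1} forces $\mass(S_l)-\mass(T_l)\ge-C_{25}\eps_lE_l$, a contradiction. You need this step (or an equivalent one) \emph{before}, not after, the strong-convergence argument; once \eqref{e:few energy} is available, your paragraph on the minimality of $g$ and the strong convergence does go through and matches the paper's second compactness argument for \eqref{e:quasiarm} and \eqref{e:quasiarm_media}.
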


\begin{proof}
{\Ema The proof of the theorem is at all analogous to the one given in \cite[Theorem~3.2]{DS3}: for this reason, we provide here only the principal parts, leaving the details to the readers.}
By rescaling and translating, it is not restrictive to assume that
$x=0$ and $r_0=1$. The proof is by contradiction: assume there exist a constant $c_1>0$, a sequence of positive real numbers $(\eps_l)_l$, a sequence of $\bOmega_l$-minimal currents $(T_l)_{l\in\N}$ and
corresponding $E_l^{\beta}$-Lipschitz approximations $(f_l)_{l\in\N}$ 
such that
\begin{equation}\label{e:contradiction}
E_l:=\bE(T_l,\bC_4) \leq \eps_l\to 0,\;\; \bOmega_l \leq \eps_l E_l^{\sfrac{1}{2}}
\quad\text{and}\quad
\int_{B_2\setminus K_l}|Df_l|^2\geq c_1\, E_l,
\end{equation}
where $K_l:= \{x\in B_3 : \bmax\be_{T_l}(x) < E_l^{2\beta}\}$
{\Ema with $\bmax\be_{T_l}$ denoting the ``non-centered'' maximal function of $\e_{T_l}$:
\begin{equation*}
\bmax\be_{T_l} (y) := \sup_{y \in B_{s} (w)\subset B_{4} (x)} \frac{\be_{T_l} (B_s (w))}{\omega_m\, s^m} = \sup_{y\in B_{s} (w)\subset B_{4} (x)}
\bE ({T_l},\bC_s (w)) .
\end{equation*}
}
Set $\Gamma_l:=\{x\in B_4 : \bmax\be_{T_{l}}(x)\leq 2^{-m}E_l^{2\beta}\}$ and
observe that $\Gamma_l\cap B_3\subset K_l$.
{\Ema From the Lipschitz approximation in \cite[Proposition~3.2]{DS3}}, it follows that
\begin{equation}\label{e:lip(1)}
\Lip (f_l) \leq C_{22} E_l^{\beta},
\end{equation}
\begin{equation}\label{e:lip(2)}
|B_{r} \setminus K_l| \leq C_{22} E_l^{-2\beta} \e_T
\bigl(B_{r+r_0 (l)}\setminus \Gamma_l\bigr) \quad \mbox{for every $r\leq 3$}\, ,
\end{equation}
where $r_0 (l)= 16 \, E_l^{(1-2\beta)/m} <\frac{1}{2}$. 
Then, \eqref{e:contradiction}, \eqref{e:lip(1)} and \eqref{e:lip(2)} give
\begin{equation*}
c_1\, E_l\leq \int_{B_2\setminus K_l}|Df_l|^2\leq
C_{22}\,\e_{T_l}(B_{s}\setminus \Gamma_l)\quad \forall \; s\in\left[\textstyle{\frac{5}{2}},3\right].
\end{equation*}
Setting $c_2:=c_1/(2C_{22})$, we have $2c_2 E_l \leq \be_{T_l} (B_s \setminus \Gamma_l)=
\be_{T_l} (B_s) - \be_{T_l} (B_s \cap \Gamma_l)$, thus leading to
\begin{equation}\label{e:improv}
\e_{T_l}(\Gamma_l\cap B_s)\leq \e_{T_l}(B_s)-2\,c_2\,E_l\, ,
\end{equation}
for $l$ large enough.
Next observe that $\omega_m 4^m E_l = \e_{T_l} (B_4) \geq \e_{T_l} (B_s)${\Ema, because $\be_{T_l}$ is a positive measure under the Assumption \ref{ipotesi_base}.} 
Therefore, by the Taylor expansion in \cite[Corollary 3.3]{DS2},
\eqref{e:improv}
and $E_l\downarrow 0$, it follows that, for every $s\in\left[5/2,3\right]$,
\begin{align}\label{e:improv2}
\int_{\Gamma_l\cap B_s}\frac{|Df_l|^2}{2} & \leq (1+C\,E_l^{2\beta})\,\e_{T_l}(\Gamma_l\cap B_s)\notag\\
&\leq (1+C\,E_l^{2\beta}) \,\Big(\e_{T_l}(B_s)-2\,c_2\,E_l\Big)
\; \leq \e_{T_l}(B_s)-c_2\,E_l.
\end{align}
Our aim is to show that \eqref{e:improv2} contradicts the {\Ema $\bOmega_l$}-almost minimizing property \eqref{e:Omega} of $T_l$.
{\Ema This is shown by constructing a suitable competitor $S_l$ for $T_l$, via a careful modification of the $E_l^{\beta}$-approximations $f_l$.
The construction of the competitor $S_l$ is identical to the one done in \cite[pages 1854-1857]{DS3}, actually simplified by the fact that our currents $T_l$ are supported in $\R^{m+n}$ and not in a Riemannian manifold. Therefore, we omit here the details of the computations (which can be found in full details in the PhD thesis of the third author, \cite{Luca-tesi}) and recall only the conclusion: there exist integer rectifiable currents $S_l$ such that $\de S_l = \de (T_l\res \bC_4)$ and}
\begin{align}
\mass (S_l) - \mass (T_l)
\leq{}&
-\frac{c_2\, E_l}{4}
+  C\,E_l^{1+\gamma}\label{e:differenza_2}\, .
\end{align}
{\Ema Now using} \eqref{e:differenza1} of the Homotopy Lemma \ref{l:homotopy} we have the upper bound
\[
\mass(S_l)-\mass(T_l)\geq-C_{25}\bOmega_l E_l^{\sfrac{1}{2}} \geq -C_{25} \eps_l E_l.
\]
Combining this inequality with \eqref{e:differenza_2} we obtain
\[
\frac{c_2 E_l}{4}\leq CE_l^{1+\gamma}+C\eps_l E_l
\]
which for $E_l, \eps_l$ sufficiently small (and hence for $l$ large enough) provides the desired contradiction.

\medskip

For what concerns \eqref{e:quasiarm}, we argue similarly.
Let $(T_l)_l$ be a sequence with vanishing $E_l:=\bE (T_l, \bC_4)$, contradicting
the second part of the statement and perform the same analysis as before.
Up to subsequences, one of the following statement must be false:
\begin{itemize}
\item[(i)] $\lim_l \int_{B_2} |Dg_l|^2 = \int_{B_2} |Dh_{l_0}|^2$, for any $l_0$
(recall that $\int_{B_2} |Dh_{l}|^2$ is constant);
\item[(ii)] $h_l$ is $\D$-minimizing in $B_2$.
\end{itemize}
If (i) is false, then there is a positive constant $c_2$ such that, for every $r\in [5/2, 3]$,
$$
\int_{B_r} \frac{|Dh_l|^2}{2} \leq \int_{B_r} \frac{|Dg_l|^2}{2} - c_2\leq \frac{\e_{T_l} (B_r)}{E_l}-\frac{c_2}{2},
$$
for $l$ large enough.
Therefore we can argue exactly as in the proof of \eqref{e:few energy} (using $h_l$ instead of $H_l$ to construct the competitors) and
reach a contradiction.
If (ii) is false, then $h_l$ is not $\D$-minimizing in $B_{5/2}$.
{\Ema This implies (cp.~\cite[pages 1857-1859]{DS3})} that we can find a competitor $F_l$ satisfying, for any $r\in [5/2,3]$, 
$$
\int_{B_r} \frac{|DF_l|^2}{2} \leq 
\int_{B_r}\frac{ |Dh_l|^2}{2} -c_2 \leq \lim_l \int_{B_r} \frac{|Dg_l|^2}{2} -2\,c_2\leq
\frac{\e_T(B_r)}{E_l}-\frac{c_2}{2}\, ,
$$
provided $l$ is large enough (where $c_2>0$ is a constant independent of $r$ and $l$). On the other hand, since $F_l = h_l$ on $B_3\setminus B_{5/2}$,
$\|\cG (F_l, g_l)\|_{L^2 (B_3\setminus B_{5/2})} \to 0$ and we argue
as above with $F_l$ in place of $H_l$ and reach a contradiction in this case as well.
{\Ema (The details of this argument are also reported in the PhD thesis of the third author \cite{Luca-tesi}).}
\end{proof}

\subsection{Improved excess estimate.} 
The higher integrability of the Dir-minimizing functions {\Ema (cp.~\cite[Theorem~6.1]{DS3})} and the harmonic
approximation {\Ema in Theorem~\ref{t:o(E)}} lead to the following estimate, which we call ``weak'' since we will improve it in the next section with Theorem~\ref{t:higher}.

\begin{proposition}[Weak excess estimate]\label{p:o(E)}
For every $\eta_{2}>0$, there exist $\eps_{24}, C_{26} >0$ with the following property.
Let $T$ be an $\bOmega$-almost minimizer and assume it satisfies \eqref{e:(H)} in $\bC_{4s} (x)$.
If $E =\bE(T,\bC_{4s}(x))\leq \eps_{24}$, then
\begin{gather}\label{e:o(E)1}
\e_T(A)\leq \eta_{2}\, \be_T(B_{4s}(x))
+ C_{26} \,\bOmega^{2}\,s^{m+2},
\end{gather}
for every $A\subset B_{s}(x)$ Borel with $|A|\leq \eps_{24}|B_{s}(x)|$ ($C_{26}$ depends {\em only} on $\eta_{2}, m,n$ and $Q$).
\end{proposition}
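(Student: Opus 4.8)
The plan is to argue by contradiction, exactly in the spirit of the harmonic approximation Theorem \ref{t:o(E)} and the analogous statement in \cite[Theorem 3.5]{DS3}. Suppose the claim fails for some fixed $\eta_2>0$: then there are a sequence $\eps_l\downarrow 0$, a sequence of $\bOmega_l$-minimal currents $T_l$ satisfying \eqref{e:(H)} in $\bC_{4s_l}(x_l)$ with $E_l := \bE(T_l,\bC_{4s_l}(x_l))\leq \eps_l$, and Borel sets $A_l\subset B_{s_l}(x_l)$ with $|A_l|\leq \eps_l |B_{s_l}(x_l)|$, such that
\[
\e_{T_l}(A_l) > \eta_2\, \be_{T_l}(B_{4s_l}(x_l)) + l\,\bOmega_l^2\, s_l^{m+2}.
\]
By translating and scaling we may assume $x_l=0$ and $s_l=1$. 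The crucial point is that this last inequality forces $\bOmega_l^2 \leq \tfrac1l\,\e_{T_l}(A_l)/1 \leq \tfrac{C}{l} E_l$, so in particular $\bOmega_l \leq \eps_l' E_l^{1/2}$ with $\eps_l'\to 0$; hence the hypotheses of Theorem \ref{t:o(E)} are met for $l$ large, and the $\bOmega_l$-error terms are genuinely negligible against $E_l$.

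Next I would apply the harmonic approximation Theorem \ref{t:o(E)} (with $\eta_1$ to be chosen small depending on $\eta_2$) to the $E_l^\beta$-Lipschitz approximations $f_l$ of $T_l$ on $\bC_{3}$, obtaining $\D$-minimizing maps $w_l$ close to $f_l$ in the senses \eqref{e:few energy}–\eqref{e:quasiarm_media}. Rescaling the energies by $E_l$, the normalized maps $\bar w_l$ converge (after subsequence) strongly in $W^{1,2}_{loc}$ to a $\D$-minimizing $w$ with $\int_{B_2}|Dw|^2\leq C$; this uses the compactness theory for $\D$-minimizers of \cite{DS1}. The key analytic input is the higher integrability of $|Dw|^2$ for $\D$-minimizers, \cite[Theorem 6.1]{DS3}: there is $p>1$ with $\||Dw|^2\|_{L^p(B_2)}\leq C\|\,|Dw|^2\|_{L^1(B_2)}$, and the same estimate holds uniformly for the $\bar w_l$ by the strong convergence (or may be quoted directly in rescaled form). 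Consequently, for any Borel set $A$ with $|A|$ small,
\[
\int_A \frac{|D\bar w_l|^2}{2} \leq \||D\bar w_l|^2\|_{L^p}\,|A|^{1-\frac1p} \leq C\,|A|^{1-\frac1p}.
\]

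Finally I would chain the estimates together on a good Borel set. Writing $\e_{T_l}(A_l) = \e_{T_l}(A_l\cap K_l) + \e_{T_l}(A_l\setminus K_l)$: on $K_l$ the Taylor expansion \cite[Corollary 3.3]{DS2} gives $\e_{T_l}(A_l\cap K_l)\leq (1+CE_l^{2\beta})\int_{A_l\cap K_l}\tfrac12|Df_l|^2$, which by \eqref{e:quasiarm} (comparing $|Df_l|$ with $|Dw_l|$) is at most $\int_{A_l}\tfrac12 |Dw_l|^2 + \eta_1 E_l\,\omega_m 4^m \leq C E_l(|A_l|^{1-1/p} + \eta_1)$; off $K_l$, the energy estimate \eqref{e:few energy} together with the Lipschitz bound and $|B_2\setminus K_l|$-control from \cite[Proposition 3.2]{DS3} bounds $\e_{T_l}(A_l\setminus K_l)$ by $C\eta_1 E_l$ plus a term controlled by $\e_{T_l}(B_{4}\setminus\Gamma_l)$ which is itself $\leq C E_l$. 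Since $|A_l|\to 0$ and $\be_{T_l}(B_4) = \omega_m 4^m E_l$, dividing by $E_l$ yields $\e_{T_l}(A_l)\leq (C\eta_1 + o(1))\be_{T_l}(B_4)$, and choosing $\eta_1$ small enough (so that $C\eta_1 < \eta_2$) contradicts $\e_{T_l}(A_l) > \eta_2\,\be_{T_l}(B_4)$ for $l$ large. The main obstacle is the careful bookkeeping of the error terms off the contact set $K_l$ and the verification that the higher-integrability constant survives the rescaling and passage to the limit; everything else is a direct transcription of \cite[Theorem 3.5]{DS3} with the harmless $\bOmega$-corrections absorbed.
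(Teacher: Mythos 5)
Your contradiction set-up is sound and in fact neatly replaces the paper's explicit case distinction: the paper argues directly, splitting into the regimes $\hat\eps^2 E\leq \bOmega^2$ (where $\e_T(A)\leq CE\leq C\bOmega^2$ and the term $C_{26}\bOmega^2 s^{m+2}$ absorbs everything) and $\bOmega^2\leq\hat\eps^2 E$ (where the harmonic approximation applies); your observation that the negated inequality forces $s_l\bOmega_l\leq C E_l^{\sfrac12}/\sqrt{l}$ achieves the same reduction. Likewise, the treatment of $A_l\cap K_l$ via the Taylor expansion, \eqref{e:quasiarm} and the higher integrability of $\D$-minimizers is the paper's argument (the extra compactness step for the normalized maps is unnecessary: Theorem~\ref{t:o(E)} already provides a $\D$-minimizer $w_l$ at each scale, to which \cite[Theorem~6.1]{DS3} applies after dividing by $\sqrt{E_l}$).

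The genuine gap is in your estimate of $\e_{T_l}(A_l\setminus K_l)$. Estimate \eqref{e:few energy} controls $\int_{B_2\setminus K_l}|Df_l|^2$, i.e.\ the Dirichlet energy of the Lipschitz approximation over the bad set; it says nothing about $\e_{T_l}(B_2\setminus K_l)=\|T_l\|\big((B_2\setminus K_l)\times\R^n\big)-Q|B_2\setminus K_l|$, because over $B_2\setminus K_l$ the current is \emph{not} the graph of $f_l$. The only a priori control on this quantity from the Lipschitz approximation is $\e_{T_l}(B_2\setminus K_l)\leq C E_l^{1-2\beta}$, or, as you write, a bound by $\e_{T_l}(B_4\setminus\Gamma_l)\leq C E_l$ with $C$ a fixed dimensional constant; neither is $\leq\eta E_l$ with $\eta$ small, so your final line $\e_{T_l}(A_l)\leq (C\eta_1+o(1))\,\be_{T_l}(B_4)$ does not follow — the $CE_l$ term cannot be absorbed. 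Converting \eqref{e:few energy} into a bound on the excess of the \emph{current} over the bad set requires invoking the $\bOmega$-minimality a second time: this is precisely the role of the Homotopy Lemma~\ref{l:homotopy} in the paper's proof, applied to the competitor $R=\bG_f\res\bC_r+P+T\res(\bC_3\setminus\bC_r)$ (with $P$ produced by a Fubini and isoperimetric argument on a good slice $r\in(1,2)$), which yields $\|T\|(\bC_r)\leq Q|B_r|+\int_{B_r}\tfrac12|Df|^2+C\hat\eps E+CE^{1+\gamma}$; subtracting the Taylor lower bound for $\|T\|$ over $K$ gives $\e_T(B_r\setminus K)\leq\int_{B_r\setminus K}\tfrac12|Df|^2+C\hat\eps E+CE^{1+\gamma}$, and only then does \eqref{e:few energy} deliver $\e_T(B_r\setminus K)\leq\eta E$. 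This comparison step is the heart of the proposition and is missing from your argument.
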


\begin{proof}
{\Ema The proof is a minor modification of \cite[Proposition~6.4]{DS3}: nevertheless, being very short, we provided here a brief account of all the arguments.}

Without loss of generality, we can assume $s=1$ and $x=0$.
We distinguish the two regimes: $\hat{\eps}^2 E \leq \bOmega^2$ and $\bOmega^2 \leq \hat{\eps}^2 E$, where $\hat{\eps} \leq \eps_{24}$ is a 
parameter whose choice will be specified later.
In the former, clearly
$\be_T(A) \leq C\,E \leq C\, \bOmega^{2}$.
In the latter, we let $f$ be the $E^{\sfrac{1}{4m}}$-Lipschitz approximation of $T$ in $\bC_{3}$.
By a Fubini-type argument as the ones already used in the previous sections, we find a radius $r\in (1,2)$ and a current 
$P$ with $\mass (P)\leq C E^{1+\gamma}$ and $\partial ((T-\bG_f)\res \bC_r) = \partial P$ for some $\gamma (m)>0$. We can thus apply the Homotopy
Lemma~\ref{l:homotopy} to $R = \bG_f \res \bC_r + P + T\res (\bC_3 \setminus \bC_r)$:
\begin{align}\label{e:T min}
\|T\| (\bC_r)&\leq \mass(R\res\bC_r) +C\bOmega E^{\sfrac{1}{2}} \nonumber \leq
\|\mathbf{G}_f\| (\bC_r)+C\hat{\eps} E +CE^{1+\gamma}\nonumber\\
&\leq
Q\,|B_r|+\int_{B_r}\frac{|Df|^2}{2}+C \hat{\eps} E + C\,E^{1+\gamma},
\end{align}
for some positive $\gamma$ (possibly smaller than the previous one), where
we used the Taylor expansion in \cite[Corollary 3.3]{DS2}.

On the other hand, using the Taylor expansion for the part of the current which coincides with the graph of $f$, we deduce as well that
\begin{align}\label{e:T below}
\|T\|(\bC_r)&= \|T\| ((B_r\setminus K)\times\R^{n})
+\|T\| ((B_r\cap K)\times\R^{n})\notag\\
&\geq \| T\| ((B_r\setminus K)\times\R^{n})+
Q\,|B_r\cap K|+\int_{B_r\cap K}\frac{|Df|^2}{2}-C\,E^{1+\gamma}.
\end{align}
Subtracting \eqref{e:T below} from \eqref{e:T min}, we then have
\begin{equation}\label{e:out K1}
\e_{T}(B_r\setminus K)\leq \int_{B_r\setminus K}\frac{|Df|^2}{2}+C \hat{\eps} E +C E^{1+\gamma},
\end{equation}
and we recall that the constant $C$ is independent of $\hat\eps$.
{\Ema Therefore, taking into account \eqref{e:few energy} of Theorem~\ref{t:o(E)}, we conclude that the excess on the exceptional set $B_r\setminus K$ is infinitesimal with respect to the $E$ if $\eps_{24}$ is chosen small enough, namely
\begin{equation}\label{e:out K}
\e_{T}(B_r\setminus K)\leq \eta\,E^{},
\end{equation}
for a suitable $\eta>0$.
Let now $A\subset B_1$ be such that $|A|\leq \eps_{24}\,\omega_m$.
Combining \eqref{e:out K} with the Taylor expansion and with \eqref{e:quasiarm} of
Theorem~\ref{t:o(E)}, we have
\begin{equation}\label{e:on A1}
\e_T (A)\leq \e_T (A\setminus K)+\int_A \frac{|Df|^2}{2}+  C\,E^{1+\gamma}
\leq \int_A
\frac{|Dw|^2}{2}+ 2\,\eta\,\be_T(B_{4}),
\end{equation}
where $w$ is a $\D$-minimizing and $\eps_{24}$ is assumed small enough.
Hence, we infer the conclusion \eqref{e:o(E)1} from the higher integrability of the gradient of $\D$-minimizing functions given in \cite[Theorem~6.1]{DS3} (see \cite[page 1861]{DS3} for the simple argument).}
\end{proof}

\subsection{Gradient $L^p$ estimate.} 
{\Ema One of the key points of the proof of Theorem~\ref{t:approx lip AM} is to show an $L^p$ estimate, for some $p>1$, for the density $\bd$ of the excess measure of an $\bOmega$-almost minimizer.}

\begin{theorem}[Gradient $L^p$ estimate]\label{t:higher1}
There exist constants $p_2 >1$ and $C, \eps_{25}>0$ (depending on $n, Q$)
with the following property. Assume $T$ satisfies \eqref{e:(H)} in the cylinder $\bC_4$.
If $T$ is an $\bOmega$-almost minimizer and $E=\bE (T,\bC_4)< \eps_{25}$, then
\begin{equation}\label{e:higher1}
\int_{\{\bd\leq1\}\cap B_2} \bd^{p_2} \leq C\, E^{p_2-1} \left(E + \bOmega^{2}\right).
\end{equation}
\end{theorem}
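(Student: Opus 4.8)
The plan is to run the standard Fefferman--Stein / Calderón--Zygmund stopping-time argument on the excess density $\bd = \bd_T$, exactly as in \cite[Theorem~6.2]{DS3}, but keeping careful track of the extra $\bOmega^2$-term that Proposition~\ref{p:o(E)} introduces. The key input is the \emph{weak excess estimate} \eqref{e:o(E)1}: for every $\eta_2>0$ there are $\eps_{24},C_{26}$ such that on every sub-ball $B_{4s}(y)\subset \bC_4$ (after rescaling) with small excess one has $\e_T(A)\le \eta_2\,\e_T(B_{4s}(y)) + C_{26}\,\bOmega^2 s^{m+2}$ whenever $|A|\le\eps_{24}|B_s(y)|$. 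This is precisely a reverse-type/$A_\infty$ statement for the measure $\e_T$ restricted to the region where $\bd\le 1$, with an additive error of size $\bOmega^2 s^{m+2}$; from an $A_\infty$-type inequality of this form one extracts higher integrability of the density.

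\textbf{Step 1: Setup and a covering/stopping argument.} Fix $\lambda \ge \lambda_0$ large (to be chosen, with $\lambda_0$ depending only on dimensional constants) and consider the superlevel set $\{\bmax\e_T > \lambda E\}\cap B_2$, where $\bmax\e_T$ denotes the (non-centered, restricted) maximal function of $\e_T$ introduced in the proof of Theorem~\ref{t:o(E)}. By the weak $(1,1)$ maximal inequality and $\e_T(B_4)=\omega_m 4^m E$, this set has measure $\le C E/(\lambda E) = C/\lambda$, which is $\le \eps_{24}\,\omega_m$ once $\lambda\ge\lambda_0$. Apply a Besicovitch/Vitali covering to this set by balls $B_{r_i}(y_i)$ which are ``stopping balls'' at height $\lambda E$: on each such ball $\e_T(B_{5 r_i}(y_i))\ge \lambda E\,\omega_m r_i^m$ (so $r_i$ is comparable to a fixed small number times a power of $\lambda^{-1}$) while on the slightly larger ball the averaged excess is still controlled, so that Proposition~\ref{p:o(E)} applies with $s\sim r_i$ on $\bC_{4 r_i}(y_i)$. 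The crucial point is that on these balls the excess is small (of order $\le C E \le \eps_{24}$ up to the maximal-function threshold), so the hypotheses of Proposition~\ref{p:o(E)} are met.

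\textbf{Step 2: Distributional inequality for $\bd$.} On each stopping ball, choosing $A = \{\bd > \lambda E\}\cap B_{r_i}(y_i)$ — which has small relative measure by the maximal inequality applied at scale $r_i$ — the weak excess estimate gives
\[
\int_{\{\bd>\lambda E\}\cap B_{r_i}(y_i)} \bd \;\le\; \e_T\big(\{\bd>\lambda E\}\cap B_{r_i}(y_i)\big)
\;\le\; \eta_2\,\e_T(B_{4 r_i}(y_i)) + C_{26}\,\bOmega^2 r_i^{m+2}
\;\le\; C\eta_2\,\lambda E\,\omega_m r_i^m + C_{26}\,\bOmega^2 r_i^{m+2}.
\]
Summing over the (boundedly overlapping) stopping balls and using $\sum r_i^m \le C\,|\{\bmax\e_T > c\lambda E\}|/ (\text{threshold}) $ together with $r_i \le r \le 1$ to absorb $r_i^{m+2}\le r_i^m$, one obtains, after comparing with the set $\{\bmax\e_T > \lambda E\}$ and re-expressing things in terms of the distribution function $g(\lambda):=\int_{\{\bd>\lambda E\}\cap B_2}\bd$ (or equivalently $\psi(t):=|\{\bd > t\}\cap\{\bd\le 1\}\cap B_2|$), a Gehring-type inequality
\[
g(\lambda) \;\le\; C\,\eta_2\,g\!\left(\tfrac{\lambda}{2}\right) + C(\eta_2)\,\bOmega^2,\qquad \lambda\ge\lambda_0.
\]
Here one must be a little careful: the clean way is to integrate the pointwise inequality against $\lambda^{p_2-2}\,d\lambda$ directly, obtaining $\int_{\lambda_0}^\infty \lambda^{p_2-2} g(\lambda)\,d\lambda \le C\eta_2 \int \lambda^{p_2-2} g(\lambda/2)\,d\lambda + C(\eta_2)\bOmega^2$, and then choosing $\eta_2$ small enough (depending only on $m,n,Q,p_2$) so that $C\eta_2 2^{p_2-1} < 1/2$, allowing the $g(\lambda/2)$-term to be reabsorbed.

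\textbf{Step 3: Absorption and conclusion.} With $\eta_2$ fixed as above (which, by Proposition~\ref{p:o(E)}, fixes $\eps_{24}$ and $C_{26}$, hence $\eps_{25}$ and the final $C$), the reabsorption yields $\int_{\{\lambda_0 E < \bd\le 1\}\cap B_2}\bd^{p_2}\,dx \le C\,E^{p_2-1}\big(\text{contribution of }g(\lambda_0)\big) + C\,E^{p_2-1}\bOmega^2 \le C E^{p_2-1}(E+\bOmega^2)$, where the first term is controlled by $g(\lambda_0) = \int_{\{\bd>\lambda_0 E\}}\bd \le \e_T(B_2)\le C E$. Finally the trivial region $\{\bd \le \lambda_0 E\}\cap B_2$ contributes at most $(\lambda_0 E)^{p_2-1}\int_{\{\bd\le\lambda_0 E\}}\bd \le C E^{p_2-1}\cdot E = C E^{p_2}$, which is $\le C E^{p_2-1}(E+\bOmega^2)$. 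Adding the two contributions gives \eqref{e:higher1}. The exponent $p_2>1$ is the Gehring exponent; it depends only on the constant $C$ in the reverse inequality, hence only on $n,Q$ (and $m$).

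\textbf{Main obstacle.} The conceptual content — the reverse/$A_\infty$ inequality for $\e_T$ — is already packaged in Proposition~\ref{p:o(E)}, so the genuinely new difficulty is bookkeeping: one must verify that the additive $\bOmega^2 s^{m+2}$ errors, summed over all scales in the Calderón--Zygmund decomposition, aggregate to the single clean term $C E^{p_2-1}\bOmega^2$ rather than something worse, and that the two-regime dichotomy ($\hat\eps^2 E\lessgtr\bOmega^2$) built into the proof of Proposition~\ref{p:o(E)} is compatible with the stopping-radii produced here. The factor $s^{m+2}$ (two powers better than the ``expected'' $s^m$) is what makes this work: it forces $\sum_i \bOmega^2 r_i^{m+2} \le \bOmega^2 r^2 \sum_i r_i^m \le C\bOmega^2 r^2 \le C\bOmega^2$ since $r\le 1$, and more importantly it is exactly the scaling needed for the $\bOmega^2$-error to behave like a lower-order curvature term at every scale. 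Once this summation is checked, the argument is a verbatim transcription of \cite[Theorem~6.2]{DS3} with the extra harmless additive constant, so I would present it concisely, referring to \cite{DS3} for the purely combinatorial parts of the Calderón--Zygmund scheme.
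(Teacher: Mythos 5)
Your proposal follows essentially the same route as the paper, which simply invokes the Calder\'on--Zygmund/Gehring scheme of \cite[Theorem~2.3]{DS3} with \cite[Proposition~6.4]{DS3} replaced by Proposition~\ref{p:o(E)}; your reconstruction of that scheme, including the identification of the key point that the additive $\bOmega^2 s^{m+2}$ errors must aggregate across scales to the single term $C\,E^{p_2-1}\bOmega^2$, is correct. One caution: the level-set inequality should be recorded as $g(\lambda)\le C\eta_2\,g(\lambda/2)+C(\eta_2)\,\bOmega^2\,\lambda^{-1}$ rather than with a $\lambda$-independent additive term --- the $\lambda^{-1}$ decay comes precisely from your own stopping-ball bound $\sum_i r_i^m\le C/\lambda$ --- since integrating a constant additive term against $\lambda^{p_2-2}\,d\lambda$ over the relevant range $\lambda\lesssim 1/E$ would only yield $C\,\bOmega^2$ instead of the sharper $C\,E^{p_2-1}\bOmega^2$ claimed in \eqref{e:higher1}.
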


\begin{proof}
{\Ema The proof is the same as the proof of \cite[Theorem~2.3]{DS3}, where \cite[Proposition~6.4]{DS3} is replaced by our Proposition~\ref{p:o(E)}.}
\end{proof}

\section{Strong excess estimate and proof of Theorem~\ref{t:approx lip AM}}

\subsection{Almgrem's strong excess estimate.} Thanks to the higher integrability of Theorem \ref{t:higher1}, we can control the excess where $\bd\leq 1$. To control it outside this region, we {\Ema prove the following strengthened version of Proposition~\ref{p:o(E)}.}

\begin{theorem}[Almgren's strong excess estimate]\label{t:higher}
There are constants $\eps_{21},\gamma_{2}, C_{27}> 0$ (depending on $n,Q$)
with the following property.
Assume $T$ satisfies Assumption \ref{ipotesi_base} in $\bC_4$ and is $\bOmega$ almost minimizing.
If $E =\bE(T,\bC_4) < \eps_{21}$, then
\begin{equation}\label{e:higher2}
\e_T (A) \leq C_{27}\, \big(E^{\gamma_{2}} + |A|^{\gamma_2}\big) \left(E+\bOmega^2\right)
\quad \text{for every Borel }\; A\subset B_{1}.
\end{equation}
\end{theorem}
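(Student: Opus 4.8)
The plan is to follow Almgren's classical covering/iteration scheme, bootstrapping the "weak" estimate of Proposition~\ref{p:o(E)} into the "strong" one. The mechanism is a stopping-time (dyadic Calder\'on--Zygmund-type) argument: we fix a small threshold and, starting from $B_1$, we keep the balls where the rescaled excess density exceeds the threshold and subdivide the rest. On the "good" balls (where the excess is small relative to the reference excess) the weak estimate applies with a definite gain; on the cumulative "bad" set the measure is controlled geometrically. Iterating produces a superadditive decay of $\e_T$ on sets of controlled measure, which is exactly the form \eqref{e:higher2}.

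More concretely, first I would reduce to estimating $\e_T(A)$ for $A$ with $|A|$ small, since for $|A|$ of order $|B_1|$ the bound $\e_T(A)\le \e_T(B_4)\le C(E+\bOmega^2)$ is trivial (after absorbing constants and using that $E\le\eps_{21}$, so a power $E^{\gamma_2}$ or $|A|^{\gamma_2}$ can be extracted). Then for a given Borel $A\subset B_1$ with $|A|\le\eps|B_1|$, I would run the following dichotomy on a generic ball $\bC_{4s}(y)\subset \bC_4$: either $\bE(T,\bC_{4s}(y))\ge\eps_{24}$ — an "$h$-type" (high-density) ball, which by the monotonicity/compactness and the density bound can only happen on a set of total measure comparable to $E\,s^m$ up to the $\bOmega$-error, hence contributes at most $C(E^{1+\gamma}+\bOmega^2 s^{m+2})$-type terms — or $\bE(T,\bC_{4s}(y))<\eps_{24}$, in which case Proposition~\ref{p:o(E)} gives
\begin{equation*}
\e_T(A\cap B_s(y))\le \eta_2\,\be_T(B_{4s}(y))+C_{26}\,\bOmega^2 s^{m+2}.
\end{equation*}
Choosing $\eta_2$ small (say $\eta_2\le 2^{-m-2}$), covering $A$ by a Besicovitch/Vitali family of such balls with bounded overlap, and summing, one gets $\e_T(A)\le\tfrac12\e_T(\tilde A)+C\bOmega^2\,(\text{sum of }s^{m+2})$ on a slightly enlarged set $\tilde A$ whose measure is still controlled; the extra factor $s^2\le 1$ in each $\bOmega$-term lets the $\bOmega^2$ contributions be summed against $|A|$ rather than blowing up. Iterating this self-improvement $j$ times halves the "excess part" each step and, because at each step the admissible measure is multiplied by a fixed factor, after $j\sim\log(1/|A|)$ steps one lands on a set of measure $\ge\eps_{24}|B_1|$ where the trivial bound closes the loop; tracking the constants converts the geometric decay into the powers $E^{\gamma_2}+|A|^{\gamma_2}$.

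The main obstacle I anticipate is the bookkeeping of the $\bOmega$-dependent error terms through the iteration: unlike the area-minimizing case where the right-hand side of the weak estimate is purely $\eta_2\,\be_T(B_{4s})$, here every application of Proposition~\ref{p:o(E)} injects an additive $C_{26}\bOmega^2 s^{m+2}$, and one must verify that summing these over the stopping-time decomposition at scale $s$ produces only $C\bOmega^2|A|\le C\bOmega^2$ (using $\sum s^{m}\lesssim |A|$ and $s\le 1$), and that the iteration does not accumulate a divergent constant. A secondary point is that Proposition~\ref{p:o(E)} as stated requires the smallness of the cylindrical excess at the given scale, so the stopping criterion must be arranged so that on every ball retained in a "good" generation the hypothesis $\bE(T,\bC_{4s}(y))\le\eps_{24}$ is automatically met — this is where the two-regime split ($\hat\eps^2 E$ vs.\ $\bOmega^2$) from the proof of Proposition~\ref{p:o(E)} is used again at the top level. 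Since the paper states that this is "the same as the proof of \cite[Theorem~2.3]{DS3}" with Proposition~6.4 of \cite{DS3} replaced by Proposition~\ref{p:o(E)}, I would simply cite that argument verbatim and only highlight the modification in the treatment of the $\bOmega^2 s^{m+2}$ terms.
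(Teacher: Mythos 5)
Your proposal conflates two different results of the paper. The covering/stopping‑time iteration of the weak estimate (Proposition~\ref{p:o(E)}) is the proof of the \emph{gradient $L^p$ estimate}, Theorem~\ref{t:higher1} — that is the result for which the paper says ``the same as the proof of \cite[Theorem~2.3]{DS3} with \cite[Proposition~6.4]{DS3} replaced by Proposition~\ref{p:o(E)}''. Theorem~\ref{t:higher} is a different and stronger statement, and the paper proves it by a genuinely variational argument, not by iterating the weak estimate. The iteration you describe can only yield $L^{p_2}$ integrability of the excess density $\bd$ \emph{on the set where $\bd\leq 1$} (equivalently, on the good set $K$ of the Lipschitz approximation), because the weak estimate is only applicable on balls whose excess stays below the threshold $\eps_{24}$; the iteration necessarily stops on the high‑density set.

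The concrete gap is your treatment of the high‑density balls. You assert that since their total \emph{measure} is of order $E\,s^m$, their contribution to $\e_T$ is of order $E^{1+\gamma}+\bOmega^2 s^{m+2}$. This does not follow: $\e_T$ restricted to a set of small measure need not be small — the excess can concentrate, and a priori $\e_T(B_1\setminus K)$ could be a fixed fraction of $E$ (the harmonic approximation only gives $\eta_1 E$ for a prescribed $\eta_1$, not a power gain). Obtaining the superlinear bound $\e_T(B_\sigma\setminus K)\leq C E^{\gamma}(E+\bOmega^2)$ is precisely the hard core of Almgren's strong estimate, and it requires constructing an explicit competitor $g$ agreeing with $f$ on $\partial B_\sigma$ (via the interpolation/convolution/$\ro^\star_\delta$ regularization of \cite[Proposition~7.3]{DS3}) and comparing via the almost‑minimality, here in the quantified form \eqref{e:differenza2} of the Homotopy Lemma~\ref{l:homotopy}. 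In the $\bOmega$‑minimal setting there is an additional new ingredient absent from your sketch: the comparison produces the error term $C\,\bOmega\int_{B_\sigma}\cG(f,g)$, which must be shown (by an explicit computation on each of the three annular regions of the definition of $g$) to be at most $C\,\bOmega\,E^{\sfrac12+\gamma}\leq C E^{\gamma}(E+\bOmega^2)$. Only after this is the good‑set contribution $\int_{A\cap K}|Df|^2$ handled by H\"older together with Theorem~\ref{t:higher1}, which is where the factor $|A|^{\gamma_2}$ comes from. Without the competitor step your argument cannot close.
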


\begin{proof}
{\Ema
The proof follows the same scheme in \cite{DS3}.
First of all, by a regularization by convolution technique, we construct a subset of radii $B\subset[1,2]$ with $|B|>\frac12$ with the property that, for every $\sigma \in B$, there exists a $Q$-valued function $g \in \Lip(B_\sigma, \Iqs)$ such that
\begin{gather}
g\vert_{\de B_{\sigma}}=f\vert_{\de B_{\sigma}},\quad
\Lip(g)\leq C_{28}\,E^{\beta_1}\label{e:g1}\\
\int_{B_{\sigma}}|Dg|^2\leq \int_{B_{\sigma}\cap K}|Df|^2 + C_{28}\, E^{\gamma_{3}}\big(E+\bOmega^2 \big),\label{e:g2}
\end{gather}
where $f$ is the $E^{\beta_1}$-Lipschitz approximation of the $\bOmega$-minimal current $T$ and $\gamma_3, C_{28}$ are dimensional positive constants.
The proof of the above estimates is given in \cite[Proposition~7.3]{DS3}.
}

Using now the isoperimetric inequality and a slicing argument, we find a radius $\sigma\in B$ and $P\in \bI_m (\R^{m+n})$ with $\partial P = \partial ((T-\bG_f)\res \bC_s)$ and $\mass (P)\leq C E^{1+\gamma}$. We can therefore apply the Homotopy Lemma~\ref{l:homotopy} to conclude that
\begin{equation}\label{e:vari_pezzi}
\|T\| (\bC_\sigma) \leq \|\bG_g\| (\bC_\sigma) + C \bOmega \int_{B_\sigma} {\Ema \cG} (g,f) + C E^{1+\gamma} {\Ema + C\,\bOmega\,E^{\sfrac{3}{4}}} .
\end{equation}
Then, from \eqref{e:vari_pezzi}, \eqref{e:g2}{\Ema, the inequality
$2\bOmega\,E^{\sfrac{3}{4}} \leq E^\gamma \bOmega^2 + E^{\sfrac{3}{2}-\gamma}$
(for any $\gamma < \sfrac{1}{2}$)}
and the Taylor expansion for $\mass (\bG_g)$ we achieve 
\begin{equation}\label{e:massa1}
\|T\| ( \bC_\sigma) \leq Q\,|B_\sigma|+
\int_{B_\sigma\cap K}\frac{|Df|^2}{2}+ CE^\gamma (E
+ \bOmega^2) + C \bOmega \int_{B_\sigma} \bG (g,f)\,,
\end{equation}
for some $\gamma>0$. On the other hand, by the Taylor's expansion in \cite[Corollary 3.3]{DS2},
\begin{align}\label{e:massa2}
\| T\| (\bC_s)&=\|T\| ((B_s\setminus K)\times \R^{n})+
\|\mathbf{G}_f\| ((B_s\cap K)\times \R^n)\notag\\
&\geq \|T\| ((B_s\setminus K)\times \R^{n})+Q\, |K\cap B_s|+
\int_{K\cap B_s}\frac{|Df|^2}{2}-C\, E^{1+\gamma},
\end{align}
possibly changing the value of $\gamma>0$.
Hence, from \eqref{e:massa1} and \eqref{e:massa2}, we get
\begin{equation}\label{e:eccesso fuori}
\e_T(B_s\setminus K)\leq
C\, E^{\gamma}\,(E+\bOmega^2) + C \bOmega \int_{B_\sigma} \bG (g,f).
\end{equation}

Next note that, by the Taylor expansion of the mass of the graph of $f$, it follows that $|Df|^2 \leq C\,\d_T\leq C E^{2\beta}<1$ a.e.~in $K$: indeed, in all Lebesgue points of $K$ and $|Df|^2$ we have that
\begin{align*}
|Df|^2(x) &=\lim_{s\to0}\frac{\int_{B_s(x)\cap K}|Df|^2}{\omega_m\,s^m} \leq 
C\,\lim_{s\to0} \frac{\e_{\bG_f}(B_s(x)\cap K)}{\omega_m\,s^m}
\\
&\leq C\,\limsup_{s\to0} \frac{\e_{T}(B_s(x))}{\omega_m\,s^m} = C \d_T(x).
\end{align*}

Therefore, for every $A\subset B_1$ Borel set, we can use the higher integrability of 
$|Df|$ in $K$ {\Ema given by Theorem~\ref{t:higher1}} to get
\begin{align*}
\e_T(A)&\leq\e_T(A\cap K)+\e_T( A\setminus K)\\
&\leq
\int_{A\cap K}\frac{|Df|^2}{2}+C\,E^{1+\gamma}+C\, E^{\gamma}\,(E+\bOmega^2) + C \bOmega \int_{B_\sigma} \bG (g,f)\notag\\
&\leq C\,|A\cap K|^{\frac{p_2-1}{p_2}}\left(\int_{A\cap K}|Df|^{q_2}\right)^{\sfrac{2}{q_2}}
+C\, E^{\gamma}\,(E+\bOmega^2)+ C \bOmega \int_{B_\sigma} \bG (g,f)\notag\\
&\leq 
C\, |A|^{\frac{p_2-1}{p_2}}\, \left(E + \bOmega^2\right)+C\, E^{\gamma}\,(E+\bOmega^2)
+ C \bOmega \int_{B_\sigma} \bG (g,f).
\end{align*}

{\Ema In order to conclude the proof we need only to estimate the term $\int_{B_\sigma} \bG (g,f)$.
For this part of the argument it is important to recall the construction of the map $g$ in \cite{DS3}.
We introduce the following notation.
Given two (vector-valued) functions $h_1$ and $h_2$ and two radii $0<s<r$, we denote by $\lin(h_1,h_2)$ the
linear interpolation in $B_{r}\setminus \bar B_s$ between $h_1|_{\partial B_r}$ and $h_2|_{\partial B_s}$, i.e., if $(\theta, t)\in {\mathbb S}^{m-1}\times [0, \infty)$ are spherical coordinates, then
$$
\lin (h_1, h_2) (\theta, t) = \frac{r-t}{r-s}\, h_2 (\theta, s)
+ \frac{t-s}{r-s}\, h_1 (\theta, r)\, .
$$
Next, we fix two parameters $\delta>0$ and $\eps>0$,
radii $1<r_1<r_2<r_3< 2$, given by
\[
r_3 = \sigma, \quad r_2=r_3 -s \quad \text{and}\quad r_1=r_2-s,
\]
with $\sigma \in B$ the radius in the estimates \eqref{e:g1} and \eqref{e:g2} (whose existence is established in \cite{DS3}) and with $\eps=E^a$, $\delta=E^b$ and $s=E^c$, where 
\begin{equation*}
a=\frac{1-2\,{\beta_1}}{2m},\quad b=\frac{1-2\,{\beta_1}}{4m\,(n\,Q+1)}
\quad\textrm{and}\quad c=\frac{1-2\,{\beta_1}}{8^{nQ}\, 4m\,(n\,Q+1)}.
\end{equation*}
Fix also $\ph\in C^\infty_c(B_1)$ a standard nonnegative mollifier.
We set $f' := \xii \circ f$. Recall the Lipschitz maps $\ro$ and $\ro^\star_\delta$ of \cite[Theorem 2.1]{DS1} and \cite[Proposition~7.2]{DS3}, respectively, and
define:
\begin{equation}\label{e:v}
\g:=
\begin{cases}
\sqrt{E}\,\ro \circ \lin\left(\frac{f'}{\sqrt{E}},\ro^\star_\delta\left(\frac{f'}{\sqrt{E}}\right)\right)& \text{in }\; B_{r_3}\setminus B_{r_2},\\
\sqrt{E}\,\ro \circ \lin\left(\ro^\star_\delta\left(\frac{f'}{\sqrt{E}}\right),\ro^\star_\delta
\left(\frac{f'}{\sqrt{E}}*\ph_\eps\right)\right)
& \text{in }\; B_{r_2}\setminus B_{r_1},\\
\sqrt{E}\,\ro^\star_\delta\left(\frac{f'}{\sqrt{E}}*\ph_\eps\right)& \text{in }\; B_{r_1}.
\end{cases}
\end{equation}
Finally set $g:= \xii^{-1}\circ g'$.
}
In particular, {\Ema recalling that $\xii^{-1}$ is Lipschitz continuous and $f= \xii^{-1} \circ f$, we can estimate as follows}
\begin{align*}
\int_{B_\sigma}\cG (f,g)
\leq{}&C \underbrace{\int_{B_\sigma\setminus B_{\sigma -s}}\Bigl|f'-\sqrt{E} \ro\circ \lin\Bigl(\frac{f'}{\sqrt{E}},\ro{\Ema^\star_\delta}\Bigl(\frac{f'}{\sqrt{E}}\Bigr)\Bigr)\Bigr|}_{I_1}+\\
+{}&C \underbrace{\int_{B_{\sigma-s}\setminus B_{\sigma-2s}}\Bigl|f'-\sqrt{E} \ro\circ \lin\Bigl(\ro{\Ema^\star_\delta} \Bigl(\frac{f'}{\sqrt{E}}\Bigr),\ro{\Ema^\star_\delta}\Bigl(\frac{f'}{\sqrt{E}}*\ph_\eps\Bigr)\Bigr)\Bigr|}_{I_2}+\\
+{}&C \underbrace{\int_{B_{{\Ema \sigma-2s}}}\Bigl|f'-\sqrt{E} \ro{\Ema^\star_\delta}\Bigl(\frac{f'}{\sqrt{E}}*\ph_\eps\Bigr)\Bigr|}_{I_3}.
\end{align*} 
We will estimate $I_1,I_2,I_3$ separately.
For what concerns $I_1$, {\Ema we recall that $\ro\circ f'=f'$, $\ro$ is Lipschitz continuous and $\lambda \ro(P)=\ro(\lambda P)$, for every $\lambda>0, P\in \cQ$, since $\cQ$ is a cone; therefore,} 
\begin{align*}
I_1
\leq{}& C \int_{\sigma-s}^\sigma \int_{\partial B_t}\sqrt{E}\, \Bigl|\frac{f'}{\sqrt{E}}-\frac{t+s-\sigma}{s}\frac{f'}{\sqrt{E}}- \frac{\sigma-t}{s}\ro{\Ema^\star_\delta}\Bigl(\frac{f'}{\sqrt{E}}\Bigr)\Bigr|\,dt\\
= {} & C\sqrt{E}\int_{\sigma-s}^{\sigma} \frac{\sigma-t}{s} \int_{\partial B_t}\Bigl|\frac{f'}{\sqrt{E}}-\ro{\Ema^\star_\delta}\Bigl(\frac{f'}{\sqrt{E}}\Bigr)\Bigr|\, dt
\leq{} C \sqrt{E} \delta^{8^{-n Q}}\,|B_\sigma\setminus B_{\sigma-s}|\leq C E^{\sfrac{1}{2}+c}
\end{align*}
where we used {\Ema $|\ro^\star_\delta(P) - P| \leq C\, \delta^{8^{-nQ}}$ from \cite[Proposition~7.2]{DS3}} and $|B_\sigma\setminus B_{\sigma-s}|\leq C s \leq C E^c$.
We next bound $I_2${\Ema : similarly as for $I_1$}
\begin{align*}
I_2
\leq{}& C\sqrt{E}\int_{\sigma-2s}^{\sigma-s} \int_{\partial B_t}\,\Bigl|\frac{f'}{\sqrt{E}}-\frac{t+2s-\sigma}{s}\ro{\Ema^\star_\delta}\Bigl(\frac{f'}{\sqrt{E}}\Bigr)-\frac{\sigma-s-t}{s}\ro{\Ema^\star_\delta}\Bigl(\frac{f'}{\sqrt{E}}*\ph_\eps\Bigr)\Bigr|\\
\leq{}& C \sqrt{E} \int_{\sigma-2s}^{\sigma-s} \int_{\partial B_t}\,\left(\Bigl|\frac{f'}{\sqrt{E}}-\ro{\Ema^\star_\delta}\Bigl(\frac{f'}{\sqrt{E}}\Bigr)\Bigr|+
\frac{\sigma-s-t}{s} \Bigl|\ro{\Ema^\star_\delta}\Bigl(\frac{f'}{\sqrt{E}}\Bigr)-\ro{\Ema^\star_\delta}\Bigl(\frac{f'}{\sqrt{E}}*\ph_\eps\Bigr)\Bigr|\right)\, dt\\
\leq {} & C E^{\sfrac{1}{2}+c}+C \int_{B_{\sigma-s}\setminus B_{\sigma-2s}}\bigl|f'-f'*\ph_\eps\bigr|
\end{align*}
where we have used the fact that $\ro{\Ema^\star_\delta}$ is Lipschitz. The estimate for $I_3$ is similarly given by
\begin{align*}
I_3\leq {}& C \sqrt{E}\int_{B_{\sigma-2s}}\Bigl(\Bigl|\frac{f'}{\sqrt{E}}-\ro{\Ema^\star_\delta}\Bigl(\frac{f'}{\sqrt{E}}\Bigr)\Bigr|+\Bigl|\ro{\Ema^\star_\delta}\Bigl(\frac{f'}{\sqrt{E}}\Bigr)-\ro{\Ema^\star_\delta}\Bigl(\frac{f'}{\sqrt{E}}*\ph_\eps\Bigr)\Bigr|\Bigr)\\
\leq{}&CE^{\sfrac{1}{2}+c} + C \int_{B_{\sigma-2s}} |f'-f'*\ph_\eps|\, . 
\end{align*}
We therefore achieve the estimate
\[
I_2+I_3 \leq C E^{\sfrac{1}{2}+c} + \int_{B_{\sigma-s}} |f'-f'*\ph_\eps|
\]
and to conclude, we compute
\begin{align*}
& \int_{B_{\sigma-s}}\bigl|f'-f'*\ph_\eps\bigr|\leq{}
\int_{B_{\sigma-s}}\int_{B_\eps}\varphi_\eps (x)|f'(y-x)-f'(y)|\,dy\,dx\\
\leq{}&\displaystyle{\int_{B_{\sigma-s}}\int_{B_\eps}\int_0^1\varphi_\eps (x) |Df'(y -tx)\cdot x| \,dt\,dy\,dx}\\
\leq{}& \displaystyle{\int_0^1\int_{B_\eps}\varphi_\eps (x) \eps \int_{B_{\sigma-s}} |Df(y-tx)| \,dy\,dx\,dt}
\leq {}\varepsilon \norm{Df}{L^1(B_\sigma)}
\leq{} C  E^{\sfrac{1}{2}+a}\, ,
\end{align*}
(where we have used the fact that $\eps\leq s$). Putting everything together we conclude that
\[
{\Ema \bOmega \int_{B_\sigma}\cG (f,g)}\leq C\,{\Ema \bOmega}\, E^{\sfrac{1}{2}+\gamma} {\Ema \leq C\,E^\gamma \big(E + \bOmega^2 \big)}
\]
for a suitable $\gamma>0$, thus concluding the proof of the Theorem.
\end{proof}

\subsection{Proof of Theorem \ref{t:approx lip AM}} 
Without loss of generality, we can assume $r=1$ and $x=0$. 
Choose ${\beta_2}<\min \{\frac{1}{2m},\frac{\gamma_3}{2(1+\gamma_3
)}\}$, where $\gamma_3$ is the constant in Theorem~\ref{t:higher}.
Let $f$ be the $E^{\beta_2}$-Lipschitz approximation of $T$.
Clearly \eqref{lip1} follows directly from {\Ema \cite[Proposition~3.2]{DS3}} if ${\Ema \beta_0}<{\beta_2}$.
Set next $A:= \left\{\bmax\be_T> 2^{-m}E^{2\,{\beta_2}}\right\}\cap B_{\sfrac98}$.
By {\Ema \cite[Proposition~3.2]{DS3}}, $|A|\leq C E^{1-2{\beta_2}}$.
Apply estimate \eqref{e:higher2} to $A$
to conclude:
\begin{equation*}
|B_1\setminus K|\leq C\, E^{-2\,{\beta_2}}\,\e_T\left(A\right)
\leq C\, E^{\gamma_3 - 2\beta_2(1+\gamma_3)} (E+\bOmega^2).
\end{equation*}
By our choice of $\gamma_3$ and ${\beta_2}$, this gives \eqref{lip2}
for some positive $\beta_0$. Finally, set $S=\mathbf{G}_f$.
Recalling the strong Almgren's estimate \eqref{e:higher2} and the 
Taylor expansion in \cite[Corollary 3.3]{DS2}, we conclude:
\begin{gather*}
\left| \|T\| (\bC_1) - Q \,\omega_m -\int_{B_1} \frac{|Df|^2}{2}\right|
\leq \e_T(B_1\setminus K)+\e_S (B_1\setminus K)+\left|\e_S (B_1)-\int_{B_1}\frac{|Df|^2}{2}\right|\notag\\
\leq  C\, E^{\gamma_3}(E+\bOmega^2)+C\, |B_1\setminus K|+C\,\Lip(f)^2\int_{B_1}|Df|^2
\leq C\,E^{\gamma_{1}} (E+\bOmega^2).
\end{gather*}
The $L^\infty$ bound follows {\Ema straightforwardly from \cite[Proposition~3.2]{DS3}}. 

{\Ema
\section{Approximation of $2$-dimensional almost minimizing currents}
As mentioned in the introduction, we state here the approximation result for two dimensional currents as in (a), (b) and (c) of Theorem~\ref{t:finale}, which will be used in our subsequent notes \cite{DSS3,DSS4}.
The following are the main assumptions.}

\begin{ipotesi}\label{ipotesi_base2}
In case (a) $\Sigma\subset\R^{m+n}$ is a $C^2$
submanifold of dimension $m + \bar n = m + n - l$, which is the graph of an entire
function $\Psi: \R^{m+\bar n}\to \R^l$ and satisfies the bounds
\begin{equation}\label{e:Sigma}
\|D \Psi\|_0 \leq c_0 \quad \mbox{and}\quad \bA := \|A_\Sigma\|_0
\leq c_0,
\end{equation}
where $c_0$ is a positive (small) dimensional constant. $\omega$ is a $C^1$ $m$-form. 
$T$ is an integral current of dimension $2$ with bounded support. Moreover it satisfies one of the three conditions (a), (b) or (c) in Definition \ref{d:semicalibrated}. In particular in case (a) we have $\supp (T)\subset \Sigma$ and $T$ is area-minimizing in $\Sigma$. In case (b) we assume $\Sigma = \R^{m+n}$ and $T$ is semicalibrated by $\omega$. In case (c) we have that $\Sigma$ coincides with a portion of $\partial \bB_R (p)$, which is the graph of a map $\Psi: \Omega \to \R$ satisfying \eqref{e:Sigma}, for some $\Omega \subset \R^{m+n-1}$.
{\Ema Finally, for some open cylinder $\bC_{4r} (x)$  (with $r\leq 1$)
and some positive integer $Q$, we assume that Assumptions~\ref{ipotesi_base} still holds.}
\end{ipotesi}

\begin{theorem}\label{t: approx lip} There exist constants $M,C_{21}, \beta_0, \varepsilon_{21}>0$ 
(depending on $m,n, \bar{n}, Q$) with the following property. Assume that $T$
satisfies Assumption \ref{ipotesi_base} in the cylinder $\bC_{4r}(x)$ and $E=\bE(T, \bC_{4r}(x))<\varepsilon_{21}$. 
Then, there exist a map $f\colon B_r(x)\to \Iq(\R^n)$, with $\{x\}\times \supp (f(x))\subset \Sigma$ for every $x$, and a closed set 
$K\subset B_r(x)$ such that
\begin{align}
\Lip (f)\leq & C_{21} E^{\beta_0} + C_{21} \bOmega r \quad \text{in case (a) and (c)}\,,\label{lip1-bis}\\
\Lip (f)\leq & C_{21} E^{\beta_0} \qquad\qquad\quad \text{in case (b)}\label{e:lip1_bis}
\end{align} 
\begin{equation}\label{lip2-bis}
\bG_f\res(K\times \R^n)=T\res (K\times \R^n)\quad \text{and}\quad |B_r(x)\setminus K|\leq C_{21} E^{\beta_0}\bigl(E+r^2 \bOmega^2 \bigr) r^m\,,
\end{equation}
\begin{equation}\label{lip3-bis}
\Bigl|\norm{T}{}(\bC_r(x))-Q\omega_m r^m -\frac{1}{2}\int_{B_r(x)}|Df|^2\Bigr|\leq C_{21} E^{\beta_0}\bigl(E+r^2 \bOmega^2\bigr)r^m\, ,
\end{equation}
where $\bOmega = \bA$ in case (a){\Ema, $\bOmega = \|d\omega\|_0$ in case (b) and $\bOmega = \frac{3}{R}$ in case (c).}
If in addition ${\bh}(T, \bC_{4r}(x)):=\sup \{|\p^\perp(x)-\p^\perp(y)|\,:\,x,y\in \supp (T)\cap \bC_{4r}(x)\}\leq r$, then
\begin{align}
&{\rm osc}(f)\leq C_{21} {\bh}(T, \bC_{4r}(x))+C_{21} (E^{1/2}+r\bOmega)r \quad \text{in case (a) and (c)}\label{lip4_(a)}\,,\\
&{\Ema{\rm osc}(f)\leq C_{21} {\bh}(T, \bC_{4r}(x))\quad \text{in case (b).}}\label{lip_(b)(c)}
\end{align}
\end{theorem}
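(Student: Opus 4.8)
The plan is to derive Theorem~\ref{t: approx lip} from the abstract approximation result Theorem~\ref{t:approx lip AM}. The first step is to check that in each of the three cases the current $T$ is $\bOmega$-minimal in the sense of Definition~\ref{d:Omega-minimal}, with $\bOmega$ equal to $\bA$ in case (a), to $\|d\omega\|_0$ in case (b) and to $\frac{3}{R}$ in case (c). Case (b) is immediate: for $S\in\bI_{m+1}(\R^{m+n})$ with compact support, Stokes' theorem gives $(T+\partial S)(\omega)=T(\omega)+S(d\omega)$, so using $\|\omega\|_c\le 1$, $|S(d\omega)|\le\|d\omega\|_0\mass(S)$ and $\omega_x(\vec T)=1$ for $\|T\|$-a.e.\ $x$ one gets $\mass(T)=T(\omega)=(T+\partial S)(\omega)-S(d\omega)\le\mass(T+\partial S)+\|d\omega\|_0\mass(S)$. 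For case (a) one composes an arbitrary competitor with the nearest-point projection $\p_\Sigma$ onto $\Sigma$: this is well defined and $C^1$ in a tubular neighbourhood of $\Sigma$ with $\|D\p_\Sigma\|\le 1+C\bA\,\dist(\cdot,\Sigma)$, it fixes $T$ and commutes with $\partial$, so $(\p_\Sigma)_\sharp(T+\partial S)=T+\partial(\p_\Sigma)_\sharp S$ is a competitor for $T$ inside $\Sigma$, and the minimality of $T$ in $\Sigma$ together with the Jacobian bound yields $\mass(T)\le\mass(T+\partial S)+C\bA\mass(S)$ (for this case one could in fact quote \cite{DS3} directly). For case (c) one uses that $p\cone T$ is area-minimizing: letting $\psi$ be the radial retraction onto $\partial\bB_R(p)$ and $\tilde S:=\psi_\sharp S$, one has $\psi_\sharp(T+\partial S)=T+\partial\tilde S$ supported in $\partial\bB_R(p)$, the cone identity gives $p\cone(T+\partial\tilde S)=p\cone T+\tilde S-\partial(p\cone\tilde S)$, and comparing $p\cone T$ with the competitor $p\cone T+\partial(p\cone\tilde S)$, using the exact formula $\mass(p\cone Z)=\frac{R}{m+1}\mass(Z)$ for $Z$ supported in $\partial\bB_R(p)$, produces $\mass(T)\le\mass(T+\partial\tilde S)+\frac{3}{R}\mass(\tilde S)$ (recall $m=2$, so $m+1=3$). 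Since the competitors that enter the proof of Theorem~\ref{t:approx lip AM} are graph-like and supported close to $\partial\bB_R(p)$, the distortion caused by $\psi$ is of lower order, and $T$ is treated as $\bOmega$-minimal with $\bOmega=\frac{3}{R}$; the systematic verification of all three cases is carried out in \cite{DSS1}.

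The second step is to invoke Theorem~\ref{t:approx lip AM} to get $f_0\colon B_r(x)\to\Iq(\R^n)$ and a closed set $K$ with $\Lip(f_0)\le CE^{\beta_0}$, $\bG_{f_0}\res(K\times\R^n)=T\res(K\times\R^n)$, $|B_r(x)\setminus K|\le CE^{\beta_0}(E+r^2\bOmega^2)r^m$, the mass estimate \eqref{lip3}, and $\osc(f_0)\le C\bh(T,\bC_{4r}(x))$ when $\bh(T,\bC_{4r}(x))\le r$. In case (b) one sets $f:=f_0$ and \eqref{e:lip1_bis}, \eqref{lip2-bis}, \eqref{lip3-bis}, \eqref{lip_(b)(c)} follow verbatim, the constraint $\{x\}\times\supp(f(x))\subset\Sigma$ being vacuous. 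In cases (a) and (c), after a rotation and translation we may assume that $\Sigma$ is the graph of a function $\Psi$ with $\Psi(0)=0$ and $D\Psi(0)=0$, whence $\|D\Psi\|_{C^0(B_{4r})}\le C\bOmega r$ and $\|\Psi\|_{C^0(B_{4r})}\le C\bOmega r^2$. Writing a $Q$-point in $\R^n=\R^{\bar n}\times\R^l$ as $\sum_i\a{(\bar y_i,\hat y_i)}$, I would define the fibrewise Lipschitz retraction onto the sheets contained in $\Sigma$ by $\Phi_x\big(\sum_i\a{(\bar y_i,\hat y_i)}\big):=\sum_i\a{(\bar y_i,\Psi(x,\bar y_i))}$ and set $f(x):=\Phi_x(f_0(x))$. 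Over $K$ the sheets of $f_0$ already lie in $\Sigma$, hence $\Phi_x$ fixes them and $\bG_f\res(K\times\R^n)=T\res(K\times\R^n)$, so \eqref{lip2-bis} is inherited with the same $K$; differentiating $x\mapsto\Phi_x(f_0(x))$ and using $\|D\Psi\|_{C^0}\le C\bOmega r$ and $\Lip(f_0)\le CE^{\beta_0}$ gives $\Lip(f)\le CE^{\beta_0}+C\bOmega r$, i.e.\ \eqref{lip1-bis}; \eqref{lip3-bis} follows from \eqref{lip3} since $f=f_0$ on $K$ and $|B_r(x)\setminus K|$ is negligible; and \eqref{lip4_(a)} follows by estimating $\osc(f)$ from the structure of its sheets, the $\bar y$-part having oscillation at most $\bh(T,\bC_{4r}(x))+\Lip(f_0)\cdot\sup_{B_r(x)}\dist(\cdot,K)$ with $\sup_{B_r(x)}\dist(\cdot,K)\le C|B_r(x)\setminus K|^{1/m}\le C(E^{1/2}+r\bOmega)r$ (here $m=2$ and $B_r(x)\setminus K$ is open), and the $\hat y$-part having oscillation at most $2\|\Psi\|_{C^0(B_{4r})}\le C\bOmega r^2$.

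The step I expect to be the real obstacle is the verification of $\bOmega$-minimality in cases (a) and (c): one must check that the mass error incurred when an \emph{arbitrary} compactly supported competitor $S$ is projected onto $\Sigma$ (resp.\ retracted onto $\partial\bB_R(p)$) is genuinely controlled by $\bOmega\,\mass(S)$ — and since the nearest-point projection and the radial retraction have Jacobian close to $1$ only near $\Sigma$, this forces one to first reduce to competitors supported in a thin neighbourhood of $\Sigma$ and to estimate the resulting homotopy term there. Once this is in place, everything else is bookkeeping, parallel to the computations of \cite{DS3}.
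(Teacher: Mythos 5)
Your handling of cases (b) and (c) is essentially the paper's proof: quote the $\bOmega$-minimality of semicalibrated currents and of spherical cross-sections (this is exactly \cite[Proposition~1.2]{DSS1}, and your Stokes and cone computations are the standard verifications), apply Theorem~\ref{t:approx lip AM}, and in case (c) post-compose the resulting map with the graph parametrization $\Psi$ of $\Sigma$ after extending its first $n-1$ components off $K$ --- this fibrewise modification is word for word what the paper does. Two minor remarks there: the paper does not rotate to achieve $D\Psi(0)=0$ (rotating changes $\pi_0$, hence the cylinder and the excess); it keeps the original reference plane and accepts $\|D\Psi\|_0\leq CE^{\sfrac12}+C\bOmega r$, which is precisely where the extra $C_{21}(E^{1/2}+r\bOmega)r$ in \eqref{lip1-bis} and \eqref{lip4_(a)} comes from, whereas you generate it through a $\dist(\cdot,K)$ estimate; both work.

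The genuine gap is case (a). You propose to subsume it under the $\bOmega$-minimality framework by projecting an arbitrary competitor onto $\Sigma$, and you yourself flag this as ``the real obstacle'' --- correctly, because the reduction does not go through as stated. Definition~\ref{d:Omega-minimal} requires $\mass(T)\leq\mass(T+\partial S)+\bOmega\,\mass(S)$ for \emph{every} compactly supported $S\in\bI_{m+1}(\R^{m+n})$, and the nearest-point projection $\p_\Sigma$ is only defined (and has Jacobian controlled by $1+C\bA\,\dist(\cdot,\Sigma)$) in a tubular neighbourhood of $\Sigma$. Even after restricting to competitors in such a neighbourhood of width $\delta$, the error one obtains from comparing $T$ with $(\p_\Sigma)_\sharp(T+\partial S)$ is of the form $C\bA\,\delta\,\mass(T+\partial S)$, i.e.\ multiplicative in the mass of the \emph{competitor}, not additive in $\mass(S)$; no argument is given converting this into $C\bA\,\mass(S)$, and indeed \cite[Proposition~1.2]{DSS1} establishes the additive inequality only for cases (b) and (c). This is why the paper does not run case (a) through Theorem~\ref{t:approx lip AM} at all: it quotes the approximation theorem \cite[Theorem~2.4]{DS3} for area-minimizing currents in Riemannian manifolds verbatim, where the construction is carried out intrinsically in $\Sigma$ (and which also directly yields $\supp(f)\subset\Sigma$, something Theorem~\ref{t:approx lip AM} cannot give you). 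To repair your argument you would either have to prove the $\bOmega$-minimality of case (a) currents in the ambient space --- which is not known in this form --- or, as the paper does, fall back on \cite{DS3} for that case.
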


\begin{proof}
{\Ema Case (a) is proved in \cite[Theorem~2.4]{DS3}, while case (b) follows directly from Theorem~\ref{t:approx lip AM} after recalling that semicalibrated currents are $\bOmega$-minimal currents for $\bOmega = \|d\omega\|_0$ by \cite[Proposition~1.2]{DSS1}.

It remains to handle case (c). Again by \cite[Proposition~1.2]{DSS1}, a current satisfying (c) of Definition~\ref{d:semicalibrated} is an $\bOmega$-minimal current for $\bOmega = \frac{3}{R}$. Therefore, we can apply Theorem~\ref{t:approx lip AM}. However, the graph of the map $f$ so obtained is not necessarily contained in $\Sigma$.} We show here how to modify it in such a way to fulfill the requirements of Theorem \ref{t: approx lip}. 
We assume that $\Psi$ is a function whose graph coincides with $\Sigma$ (the connected component of $\partial \bB_R (p) \cap \bC_{4r} (x)$ containing $\supp (T)$) and arguing as in \cite[Remark 1.5]{DS3} we can assume that $\|\Psi_0\|\leq C E^{\sfrac{1}{2}} r + C \bOmega r^2$, $\|D\Psi\|_0\leq C E^{\sfrac{1}{2}} + C \bOmega r$ and $\|D^2\Psi\|_0 \leq C \bOmega$. The domain of $\Psi$ is a subset of $B_{4r}  (x) \times \R^{n-1}$. Let now $f= \sum_i \a{f_i}$ be the function given by Theorem~\ref{t:approx lip AM} and let $\bar f = \sum_i \a{\bar f_i}$, where $\bar f_i (y)$ gives the first $n-1$ coordinates of $f_i (y)$. Observe that on the set $K$ we necessarily have
\[
f (y) = \sum_i \a{(\bar f_i (y), \Psi (y, \bar f_i (y))}\, .
\] 
We then can extend $\bar f$ to $B_r (x) \setminus K$ with $\Lip (\bar f) \leq C \Lip (f)$ and ${\rm osc}\, (\bar f) \leq C {\rm osc}\, (f)$ and hence define $\hat f (y) = \sum_i \a{(\bar f_i (y), \Psi (y, \bar f_i (y))}$ for {\em every} $y\in B_r (x)$ (it must be shown that $(y, \bar f_i (y))$ belongs to the domain of definition of $\Psi$, but this follows easily from the smallness of ${\rm osc}\, (\bar f)$). Obviously $f= \hat f$ on $K$. On the other hand it is straightforward to check that 
\begin{align*}
\Lip (\hat f) \leq & C\, \Lip (\bar f) + C (\Lip (\bar f) +1) \|D\Psi_0\| \leq C E^{\beta_0} + C \bOmega r\\
{\rm osc}\, (\hat f) \leq & C\, {\rm osc} \, (f) + \|\Psi\|_0 \leq C \bh (T, \bC_{4r} (x)) + C (E^{\sfrac{1}{2}} + \bOmega r) r\, .
\end{align*}
In addition we conclude
\[
\left|\int_{B_r (x)} |Df|^2 - \int_{B_r (x)} |D\hat f|^2 \right| \leq (\Lip (f)^2 + \Lip (\hat f)^2) |B_r (x)\setminus K|\leq C {\Ema |B_r (x)\setminus K|\, .}
\]
Thus the estimates in Theorem~\ref{t:approx lip AM} complete the proof.
\end{proof}

\appendix

\section{A singular semicalibrated current}
Here we give an explicit example of a $2$-dimensional current
with a singular point that is semicalibrated by a differential form which is not closed.

Consider a function $u:\R^2 \to \R^2$ which is $C^{\infty}$ but not analytic,
and assume that
\[
u(0) = |\nabla u (0)| = 0.
\]
We need to introduce the following:
\begin{enumerate}
\item $E_1, E_2 \in \cT\big(\gr(u)\big)$ and $E_3, E_4 \in \cN\big(\gr(u)\big)$ 
smooth sections of the tangent and the normal bundles of
$\gr(u) \subset \R^4$ considered as a smooth oriented submanifold such that
\[
E_i(p) \cdot E_j(p) = \delta_{ij} \quad 
\forall\;p \in \gr(u),\;
\forall\; i, j =1,\ldots, 4;
\]
moreover we assume that 
$E_{i}(0) = e_i$ for every $i=1,\ldots, 4$, where $\{e_i\}_{i=1,\ldots,4}$
is the standard basis of $\R^4$;

\item $\theta^1, \ldots, \theta^4$ the dual fields:
\[
\theta^i(p) \big(E_j(p)) = \delta^i_j \quad
\forall\;p \in \gr(u), \;
\forall\; i, j =1,\ldots, 4;
\]

\item $\p_u$ the nearest point projection on $\gr(u)$,
which exists in a tubular neighborhood of the submanifold $\gr(u)$
and therefore, in particular, in $B_{r_0}$ for some $r_0>0$;

\item $\a{D} := (e_3\wedge e_4) \cdot \cH^2\res\{x_1=x_2=0\}$
the current associated to the oriented integration on the vertical
plane $D=\{x_1=x_2=0\}$.
\end{enumerate}

It is now elementary to verify the following claims:
\begin{itemize}
\item[(i)] the smooth $2$-dimensional differential form 
\[
\omega (x) := \theta^1(\p_u(x)) \wedge \theta^2(\p_u(x))+
\theta^3(\p_u(x)) \wedge \theta^4(\p_u(x))
\]
is a semicalibration in $B_{r_0}$;
\item[(ii)] the current $T:=\bG_u + \a{D}$ is semicalibrated by $\omega$ in $B_{r_0}$
and $0 \in \sing(T)$.
\end{itemize}

Note that $\omega$ is not a closed form, for in this case $T$ would be an
area minimizing current thus implying that $\supp(T) \setminus \{0\}$
is locally the graph of an analytic map (cf.~\cite[Theorem~5.5]{Osserman}): this is obviously not the case  
for a generic smooth $u$. 

\medskip

Actually, following the same principles, it is simple
to construct many more examples. In particular it is possible to construct
examples where the semicalibrated current has a branching singularity. However
the corresponding computations are slightly more involved.

\bibliographystyle{plain}
\bibliography{references-App}

\end{document}